\newtheorem{prop}{Proposition}
\newtheorem{thm}{Theorem}
\newtheorem{corollary}{Corollary}
\newtheorem{lemma}{Lemma}
\newtheorem{question}{Question}
\newtheorem{remark}{Remark}
\DeclareMathOperator\R{R}
\DeclareMathOperator\Ric{Ric}
\DeclareMathOperator\psh{psh}
\DeclareMathOperator\tr{tr}
\begin{document}
\bibliographystyle{amsplain}

\author{Xi Sisi Shen}
\address{Department of Mathematics\\
  Northwestern University\\
  2033 Sheridan Road, Evanston, IL 60208}
\email[X. S. Shen]{xss@math.northwestern.edu}

\title{Estimates for metrics of constant Chern scalar curvature}

\begin{abstract}
We prove a priori estimates for constant Chern scalar curvature metrics on a compact complex manifold conditional on an upper bound on the entropy, extending a recent result by Chen-Cheng in the K\"ahler setting.
\end{abstract}
\maketitle

\section{Introduction}
Calabi introduced extremal K\"ahler metrics \cite{calabi1} as critical points of the $L^2$ norm of the curvature tensor, now known as the Calabi functional, in his search for the ``best" canonical metric in a given K\"ahler class. K\"ahler-Einstein and constant scalar curvature metrics are examples of extremal metrics. Existence of K\"ahler-Einstein metrics was proved independently by Yau \cite{yau78} and Aubin \cite{aubin78} for manifolds of negative first Chern class and by Yau \cite{yau78} for those of zero first Chern class. For manifolds of positive first Chern class (Fano manifolds), the Yau-Tian-Donaldson conjecture asserts that K-stability is a necessary and sufficient condition for existence a of K\"ahler-Einstein metric. The sufficiency was established by Chen-Donaldson-Sun \cite{cds1,cds2,cds3}, building on the work of Tian-Yau \cite{tian-yau87}, Tian \cite{tian87} in the case of Fano surfaces. The reverse implication was shown by Tian \cite{tian97}, Donaldson \cite{donaldson}, Stoppa \cite{stoppa} and the most general form by Berman \cite{berman}. The literature in the field is vast and we refer the reader to the surveys \cite{darvas, donaldsonbook, pss, gabor2} for references and some recent developments.

The Yau-Tian-Donaldson conjecture for constant scalar curvature K\"ahler metrics, abbreviated cscK, remains open; while it is known that cscK implies K-stability \cite{stoppa, berman}, the converse is still not settled. A recent breakthrough by Chen-Cheng \cite{cc17} addressed the existence of a cscK metric within a given K\"ahler class using the continuity path of Chen \cite{chen15} (see also \cite{h15,z16}). Chen-Cheng established a priori estimates under the assumption of a uniform upper bound for entropy, given by $$\text{Ent}(\tilde{\omega},\omega) = \int_X \log\tfrac{\tilde{\omega}^n}{\omega^n}\tilde{\omega}^n.$$ We note that the entropy is automatically bounded below since the map $x \mapsto x \log x$ for $x>0$ has a lower bound. Using their estimates, Chen-Cheng prove in \cite{cc18} that the properness of K-energy in terms of $L^1$ geodesic distance implies the existence of a cscK metric. In addition, they show that for manifolds with discrete automorphism group, non-increasing K-energy and the existence of a destablized geodesic ray is equivalent to the non-existence of cscK.   Chen-Cheng's work has been extended by He \cite{he1, he2} to the cases of Sasaki metrics and extremal metrics.

This paper addresses the question of whether the above theory can be extended to the non-K\"ahler complex setting. Indeed there has been a surge of interest recently in extending the study of geometric PDEs to the non-K\"ahler setting \cite{acs2, hlt, ppz, popovici, streets, stw, tw15, yury, yang, yzz}.

Let $X$ be a compact complex manifold of complex dimension $n$ and define a Hermitian metric $g$ on $X$ to be a smooth tensor such that  $(g_{i\bar{j}})$ is a positive definite Hermitian matrix at each point of $X$. Associate to $g$ a real $(1,1)$-form $\omega$ given by $$\omega = \sqrt{-1}g_{i\bar{j}}dz^i\wedge d\overline{z^j}$$ which we will also refer to as a Hermitian metric. Define the Chern scalar curvature of $\omega$ by $$\R(\omega) = -g^{i\bar{j}}\partial_i\partial_{\bar{j}}\log\det g.$$  It is natural to ask:

\begin{question}  Let $(X, \omega)$ be a compact Hermitian manifold.
Under what conditions does there exist a constant Chern scalar curvature metric of the form $\tilde{\omega}=\omega+\sqrt{-1}\partial\bar{\partial}\varphi$ for a smooth function $\varphi$?
\end{question}

A different problem is to look for a Hermitian metric with constant Chern scalar curvature within a given Hermitian conformal class, and this was investigated by Angella-Calamai-Spotti \cite{acs}.

In this paper, we seek to make progress towards answering Question 1. We prove a generalization of the Chen-Cheng estimates in the non-K\"ahler setting, under an assumption of the $\partial\bar{\partial}$-closedness of the metric $\omega$ and its square.  Namely:

\begin{thm}
Let $(X^n, \omega)$ be a compact Hermitian manifold of dimension $n$ such that $\omega$ satisfies $\partial\bar{\partial}\omega^k = 0$ for $k=1,2$. If $\tilde{\omega}= \omega +\sqrt{-1}\partial\bar{\partial}\varphi$ is a constant Chern scalar curvature Hermitian metric on $X$ for smooth potential function $\varphi$ then for all k, there exists $C(k)$ depending only on $(X,\omega)$ and upper bound for $\emph{Ent}(\tilde{\omega},\omega)$ such that $||\varphi||_{C^k(X,\omega)}\le C(k).$
\end{thm}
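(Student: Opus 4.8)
The plan is to adapt the scheme of Chen--Cheng \cite{cc17} to the Hermitian category: decouple the fourth-order constant Chern scalar curvature equation into a second-order elliptic system, run an entropy-driven iteration to get zeroth- and second-order bounds, and bootstrap with elliptic theory; the extra work compared to the K\"ahler case is the bookkeeping of torsion terms, which is exactly what the hypothesis $\partial\bar\partial\omega^k=0$ for $k=1,2$ is there to handle. Writing $F=\log\frac{\tilde\omega^n}{\omega^n}$ and using $\R(\tilde\omega)=-\tilde g^{i\bar j}\partial_i\partial_{\bar j}(F+\log\det g)$, the equation $\R(\tilde\omega)\equiv\underline{\R}$ becomes
\begin{equation*}
\tilde\omega^n=e^{F}\omega^n,\qquad \Delta_{\tilde\omega}F=\tr_{\tilde\omega}\Ric(\omega)-\underline{\R}.
\end{equation*}
I would first record that, modulo $\partial\bar\partial\omega=0$, the condition $\partial\bar\partial\omega^2=0$ is equivalent to $\bar\partial\omega\wedge\partial\omega=0$, and that the two together force $\partial\bar\partial\omega^k=0$ for all $k$; moreover, since $\partial(\sqrt{-1}\partial\bar\partial\varphi)=\bar\partial(\sqrt{-1}\partial\bar\partial\varphi)=0$, the metric $\tilde\omega$ inherits $\partial\bar\partial\tilde\omega^k=0$ for all $k$ and $\bar\partial\tilde\omega\wedge\partial\tilde\omega=0$. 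In particular $\int_X\tilde\omega^n=\int_X\omega^n$ is fixed, $\underline{\R}=n\int_X\Ric(\omega)\wedge\omega^{n-1}/\int_X\omega^n$ is a controlled constant, and the integrations by parts below (exact in the K\"ahler case) close up to terms killed by these identities. Normalize $\sup_X\varphi=0$; the entropy hypothesis reads $\int_X Fe^F\omega^n\le C$, which via $xe^x\ge-e^{-1}$ bounds $\int_X F_+ e^F\omega^n$ from above.

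For the zeroth-order estimates I would first obtain $\|\varphi\|_{L^\infty}\le C$: the entropy bound places the Monge--Amp\`ere density $e^F$ in a sharp integrability class, and a Hermitian version of the Ko\l odziej / $\alpha$-invariant / De Giorgi iteration argument (Tosatti--Weinkove, Ko\l odziej--Nguyen, Guo--Phong--Tong) then gives the $C^0$ bound. The core is a Moser iteration on the system: testing the $F$-equation against $e^{pF}$, and the identity $\Delta_{\tilde\omega}\varphi=n-\tr_{\tilde\omega}\omega$ against powers of $\tr_\omega\tilde\omega$, and using the Sobolev inequality of $\tilde\omega$ (its constant controlled through the fixed volume and $\|\varphi\|_{L^\infty}$), one closes an iteration yielding $\|F\|_{L^\infty}\le C$ and, simultaneously, $\tr_\omega\tilde\omega\le C$, i.e. $C^{-1}\omega\le\tilde\omega\le C\omega$. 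The torsion errors produced on integrating by parts are linear in $\nabla\omega$ and are absorbed using $\partial\bar\partial\omega=0$, while the quadratic ones coming from the $\tr_\omega\tilde\omega$ powers are absorbed using $\bar\partial\omega\wedge\partial\omega=0$.

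The hardest part, which I expect to be the main obstacle, is the second-order bound $\tr_\omega\tilde\omega\le C$. Computing $\Delta_{\tilde\omega}\log\tr_\omega\tilde\omega$ in the Hermitian setting, in the style of Cherrier and Tosatti--Weinkove, produces third-order terms of schematic type $\nabla\omega\ast\nabla\bar\nabla\varphi$ together with terms involving $\partial F$, none with a favorable sign. I would control them by (i) the Aubin--Yau device of working with $\log\tr_\omega\tilde\omega-A\varphi$ and using $\tr_\omega\tilde\omega\cdot\tr_{\tilde\omega}\omega\ge c$ so that the negative contribution $\Delta_{\tilde\omega}(-A\varphi)=-An+A\tr_{\tilde\omega}\omega$ dominates, and (ii) a gradient estimate $\|\partial F\|_{L^\infty}\le C$ obtained from the second equation by a Bernstein-type computation for $|\partial F|^2_{\tilde\omega}$, in the spirit of Cheng--Yau and Chen--Cheng. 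Making (i), (ii) and the $L^\infty$ bounds of the previous step close simultaneously, while every torsion contribution stays under control, is the principal technical difficulty, and precisely where the hypotheses $\partial\bar\partial\omega^k=0$, $k=1,2$, are used essentially.

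Finally, once $C^{-1}\omega\le\tilde\omega\le C\omega$ and $\|F\|_{L^\infty}\le C$, the Monge--Amp\`ere equation $\tilde\omega^n=e^F\omega^n$ is uniformly elliptic, so the complex Evans--Krylov estimate in non-divergence form (Tosatti--Wang--Weinkove--Yang) gives $\|\varphi\|_{C^{2,\alpha}}\le C$; feeding this into the linear equation for $F$ yields $F\in C^{1,\alpha}$, then $C^{2,\alpha}$, and a standard Schauder bootstrap on the coupled system gives $\|\varphi\|_{C^k(X,\omega)}\le C(k)$ for every $k$, completing the proof.
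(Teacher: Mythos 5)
Your overall architecture (coupled second-order system, entropy-driven $C^0$ bounds, trace estimate, Evans--Krylov plus Schauder bootstrap) is the right skeleton, but the two central steps are not carried by the mechanisms you propose. For the $C^0$ bound, the entropy hypothesis only gives $e^F\in L\log L$, which is \emph{not} an integrability class to which the Dinew--Ko\l odziej/B\l ocki or Ko\l odziej-type theorems apply; one needs $e^F\in L^q$ for some $q>1$ (or at least $L(\log L)^p$ with $p$ large), and this does not follow from the entropy alone. The actual mechanism, both in Chen--Cheng and in the Hermitian setting, is to use the second equation $\tilde\Delta F=-\underline{\R}+\tr_{\tilde\omega}\Ric(\omega)$: one solves an auxiliary Hermitian Calabi--Yau equation (Cherrier/Tosatti--Weinkove) with density $e^F\sqrt{F^2+1}/I_F$ to produce a potential $\psi$, applies the ABP maximum principle to $e^{\delta(F+\varepsilon\psi-\lambda\varphi)}$, and combines the output with a Hermitian $\alpha$-invariant to conclude $\int_X e^{\alpha F/\varepsilon}\omega^n\le C$; only then do the pluripotential estimates give $\|\varphi\|_0$ and, after a further maximum principle, two-sided bounds on $F$. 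Your alternative --- Moser iteration testing the $F$-equation against $e^{pF}$ using ``the Sobolev inequality of $\tilde\omega$ with constant controlled through the fixed volume and $\|\varphi\|_{L^\infty}$'' --- is circular: the Sobolev constant of $\tilde\omega$ is not controlled by its volume and an oscillation bound on the potential; controlling it is essentially equivalent to the quasi-isometry you are trying to prove.

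For the second-order bound the same problem recurs. The Cherrier/Aubin--Yau inequality for $\tilde\Delta\log\tr_\omega\tilde\omega$ contains the term $\Delta F$ (genuinely second derivatives of $F$), so the device $\log\tr_\omega\tilde\omega-A\varphi$ cannot close even granting a bound on $|\partial F|$; and a stand-alone Bernstein estimate for $|\partial F|^2_{\tilde\omega}$ is not available, since the relevant computation produces curvature and torsion terms of $\tilde\omega$ that require the trace bound you do not yet have. The working argument couples the two quantities into a single test function $Q=e^{A(F)}|\partial F|^2_{\tilde\omega}+N(\tr_\omega\tilde\omega)^{B+1}$, with $A(F)=\kappa e^F+(\tfrac12-\varepsilon)F$ chosen so that the good terms $\tfrac12|\tilde\nabla\bar{\tilde\nabla}F|^2_{\tilde\omega}$ and $\varepsilon|\partial F|^4_{\tilde\omega}$ survive to absorb the torsion and $g^{i\bar j}F_{i\bar j}$ terms; even then one only gets $\tilde\Delta Q\ge -C(\tr_\omega\tilde\omega)^{B+1}Q$, which is useless for a pointwise maximum principle and must be fed into a Moser iteration with respect to the \emph{background} metric $\omega$. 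That iteration needs as its base case an $L^p$ bound on $\tr_\omega\tilde\omega$ for every $p$, which is a separate integral estimate (via Cherrier's inequality and integration by parts, with extra terms from the non-closed volume form), and that estimate in turn requires a uniform bound on $|\partial\varphi|^2_\omega$ proved by yet another maximum principle. These two intermediate estimates --- the gradient bound on $\varphi$ and the $L^p$ trace bounds --- are absent from your plan, and without them the ``close everything simultaneously'' step you flag as the main difficulty has no mechanism behind it.
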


In our proof, the assumption that the given Hermitian metric $\omega$ satisfies $\partial\bar{\partial}\omega^k=0$ for $k=1,2$ ensures that the average Chern scalar curvature $\underline{\R}$ for the metric remains unchanged up to addition of $\sqrt{-1}\partial\bar{\partial}\varphi$ and preserves some other useful integral properties. In the case of complex surfaces, this assumption is very natural since it coincides with the metric being Gauduchon and it is a well-known result by Gauduchon that every Hermitian metric is conformal to a Gauduchon metric \cite{gauduchon77}. We plan on using these estimates towards building an existence theory for constant Chern scalar curvature metrics in subsequent work. 

The constant Chern scalar curvature Hermitian metric $\tilde{\omega} = \omega +\sqrt{-1}\partial\bar{\partial}\varphi$ equation can be written as the following coupled equations:
 \begin{align}\begin{split}\label{ccsc_eqn}
& \ F = \log\tfrac{\tilde{\omega}^n}{\omega^n} \\
\tilde{\Delta}F &= -\underline{\R}+\tr_{\tilde{\omega}} \Ric(\omega)
\end{split} \end{align}
where $\tilde{\Delta}$ and $\tr_{\tilde{\omega}}$ denote the Chern Laplacian and trace with respect to $\tilde{\omega}$, respectively.

Our proof of Theorem 1 follows the basic outline of Chen-Cheng \cite{cc17}. However, difficulties arise from the non-K\"ahlerity of $\omega$. To prove the theorem, it is sufficient to prove that $\tilde{\omega}$ is quasi-isometric to $\omega$ since all higher derivatives of $\varphi$ can then be obtained by a straightforward bootstrapping method (see Proposition 1.2 of \cite{cc17}) where we use the result in \cite{tssy} for the $C^{2,\alpha}$ estimate since we are working in the non-K\"ahler setting. 

We cover several well-known identities for covariant derivatives, curvature and torsion and establish the notation and conventions used in this paper in Section 2. 

In Section 3, we secure $C^0$ bounds on $\varphi$ and $F$ in terms of $(X,\omega)$ and the entropy following the sequence of arguments from \cite{cc17}, but using instead a non-K\"ahler generalization of Yau's theorem \cite{cherrier, tw09}, a non-K\"ahler generalization of Tian's $\alpha$-invariant, and a uniform estimate in the non-K\"ahler setting by Dinew-Ko\l{}odziej \cite{dk09} and B\l{}ocki \cite{blocki2}. 

A bound on the gradient of $\varphi$ depending only on $(X,\omega)$ and the entropy is established in Section 4 by applying a maximum principle to a modified quantity from that of Chen-Cheng \cite{cc17} to account for the new torsion terms that arise.

In Section 5, we obtain an $L^p$ bound on $\tr_\omega \tilde{\omega}$ depending only on $p$, $(X,\omega)$ and the entropy using an inequality by Cherrier \cite{cherrier} from the study of the non-K\"ahler complex Monge-Amp\`ere equation (see also (9.5) of \cite{tw15}) and a modified quantity from that of Chen-Cheng \cite{cc17} to provide control over torsion terms. We use a step involving integration by parts and note that an additional term arises from the derivative landing on the volume form since the volume form is not assumed to be closed.

Finally, we bound $\tr_\omega \tilde{\omega}$ depending on $L^p$ bounds in Section 6 following the method of \cite{cc17}. In order to control several bad terms arising from torsion, we make a very specific choice of the quantity to which we apply the maximum principle. From this, we obtain the bounds needed for the Moser iteration (see Section 4 of \cite{cc17}) that lead us to the desired $L^\infty$ bound on $\tr_\omega \tilde{\omega}$, with the $L^p$ bound from Section 5 serving as the base case for the iteration. This bound immediately gives us the $L^\infty$ bound on $\tr_{\tilde{\omega}} \omega$ since we have bounds on $F=\log \tfrac{\tilde{\omega}^n}{\omega^n}$ from Section 3, proving the quasi-isometry of $\omega$ and $\tilde{\omega}$.

\section{Preliminaries}

In this section, for the convenience of the reader, we include several well-known identities that will be needed for computations in the subsequent sections  (see also Section 2 of \cite{tw15}). 

Let $X$ be a compact complex manifold of complex dimension $n$. In this paper, we will frequently compute in complex coordinates $z^1,\ldots, z^n$ and write tensors in terms of this coordinate system. Let $g=g_{i\bar{j}}$ be a Hermitian metric on $X$ with associated $(1,1)$-form $\omega=\sqrt{-1}g_{i\bar{j}}dz^i\wedge d\overline{z^j}$ where all repeated indices are to understood as being summed from $1$ to $n$. We will often also refer to $\omega$ as a Hermitian metric. 

Let $\nabla$ be the \textit{Chern connection} associated to $g$, defined for a $(1,0)$-form $a = a_kdz^k$ as
\begin{align}\begin{split}
&\nabla_i a_k= \partial_i a_k - \Gamma_{ik}^j a_j\ , \ \ \  \nabla_i \overline{a_k} = \partial_i \overline{a_k}\label{cov_to_partial}
\end{split}\end{align}
and for a vector field $X=X^k\partial_k$ as
\begin{align*}
\nabla_i X^k = \partial_i X^k +\Gamma^k_{ij}X^j\ , \ \ \ \nabla_i \overline{X^k} = \partial_i\overline{X^k}
\end{align*}
where $\Gamma^k_{ij} = g^{k\bar{p}}\partial_i g_{j\bar{p}}$ is the Christoffel symbol of $g$ and $g^{k\bar{p}}g_{i\bar{p}} = \delta_{ik}$. For a function $f$, $\nabla_i f = \partial_i f$.
The Chern connection is compatible with the metric $g$ in the sense that $\nabla_k g_{i\bar{j}}=0$ $\forall i,j,k$.

The metric $\omega$ defines a pointwise norm on any tensor. Given $a, X$ as above we have that $$|a|^2_\omega = g^{i\bar{j}}a_i\overline{a_j} \ , \ \ |X|^2_\omega = g_{i\bar{j}}X^i \overline{X^j}.$$ For a tensor $Y^{i\bar{k}}_m$, we have that $|Y|^2_\omega = g_{i\bar{j}}g_{\ell\bar{k}}g^{m\bar{n}}Y^{i\bar{k}}_m \overline{Y^{j\bar{\ell}}_n}$.

We define the \textit{trace} of a real $(1,1)$-form $\alpha=\alpha_{i\bar{j}}dz^i\wedge d\overline{z^j}$ with respect to $\omega$ by $$\tr_\omega \alpha = g^{i\bar{j}}\alpha_{i\bar{j}} = \tfrac{n\omega^{n-1}\wedge \alpha}{\omega^n}.$$

The \textit{curvature tensor} is defined as
\begin{align*}
R_{i\bar{j}k}^{\;\;\;\;\;p} = -\partial_{\bar{j}}\Gamma^p_{ik} \ , \ \ \ R_{i\bar{j}k\bar{\ell}} = g_{p\bar{\ell}}R_{i\bar{j}k}^{\;\;\;\;\;p}
\end{align*}
where we note that $\overline{R_{i\bar{j}k\bar{\ell}}} = R_{j\bar{i}\ell\bar{k}}$. 

The \textit{torsion} of $g$ is defined by $$T^k_{ij} = \Gamma^k_{ij} - \Gamma^k_{ji}.$$

We have the following formulae for commuting indices of the curvature tensor:
\begin{align}\begin{split}\label{curvature_comm_formula}
R_{i\bar{j}k}^{\;\;\;\;\;p} - R_{k\bar{j}i}^{\;\;\;\;\;p} = \partial_{\bar{j}}\Gamma^p_{ki} - \partial_{\bar{j}}\Gamma^p_{ik} = \partial_{\bar{j}}T^p_{ki}\\
R_{i\bar{j}\;\;\bar{p}}^{\;\;\;\bar{k}} - R_{i\bar{p}\;\;\bar{j}}^{\;\;\;\bar{k}} = \partial_{i}\overline{\Gamma^k_{pj}} - \partial_{i}\overline{\Gamma^k_{jp}} = \partial_{i}\overline{T^k_{pj}} .
\end{split}\end{align}

We write the \textit{Chern-Ricci curvature} of $\omega$ as 
\begin{align*}
R_{i\bar{j}} = g^{k\bar{\ell}}R_{i\bar{j}k\bar{\ell}} = -\partial_i \partial_{\bar{j}} \log\det g,
\end{align*}
its associated form as
\begin{align*}
\Ric(\omega)=\sqrt{-1}R_{i\bar{j}}dz^i\wedge d\overline{z^j}
\end{align*}
and its \textit{Chern scalar curvature} as
\begin{align*}
\R(\omega) = g^{i\bar{j}}R_{i\bar{j}} = \tr_\omega \Ric(\omega).
\end{align*}

Let $\tilde{\omega} = \omega+\sqrt{-1}\partial\bar{\partial}\varphi$ be another Hermitian metric on $X$. From this definition, it is clear that $$(\partial \omega)_{jk\bar{\ell}} = (\partial \tilde{\omega})_{jk\bar{\ell}}$$ where $(\partial\omega)_{jk\bar{\ell}} = \partial_j g_{k\bar{\ell}} - \partial_k g_{j\bar{\ell}}$. Denoting the torsion of $\tilde{\omega}$ by $\tilde{T}$, it follows that
\begin{align}\begin{split}\label{torsion}
T^p_{jk}g_{p\bar{\ell}} = (\partial\omega)_{jk\bar{\ell}} &= (\partial\tilde{\omega})_{jk\bar{\ell}} = \tilde{T}^q_{jk}\tilde{g}_{q\bar{\ell}}\\
\overline{T^p_{j\ell}}g_{k\bar{p}} = (\bar{\partial}\omega)_{\bar{j}k\bar{\ell}}&=(\bar{\partial}\tilde{\omega})_{\bar{j}k\bar{\ell}}= \overline{\tilde{T}^q_{j\ell}}\tilde{g}_{k\bar{q}}.
\end{split}
\end{align}
where $\tilde{g}_{i\bar{j}}$ is the metric in coordinates for $\tilde{\omega}$.

For simplicity, we will use the notation $\tilde{T}_{jk\bar{\ell}}=\tilde{T}^p_{jk}\tilde{g}_{p\bar{\ell}}$ and $T_{jk\bar{\ell}}=T^q_{jk}g_{q\bar{\ell}}$ and so the above equality can be rewritten as $\tilde{T}_{jk\bar{\ell}}=T_{jk\bar{\ell}}$.

We provide some commutation formulae which we will need for computations in the next few sections. For a $(1,0)$-form $a=a_kdz^k$ , we have
\begin{align}\begin{split}\label{a_comm_formula}
[\nabla_i, \nabla_{\bar{j}}]a_k &= -R_{i\bar{j}k\;}^
{\;\;\;\;\;\ell}a_{\ell}\\
[\nabla_i,\nabla_{\bar{j}}]\overline{a_{l}} &= R_{i\bar{j}\;\;\bar{\ell}}^{\;\;\;\bar{k}}\overline{a_{k}}\\
[\nabla_i,\nabla_j] \overline{a_k} &= -T^r_{ij}\nabla_r \overline{a_k}\\
[\nabla_{\bar{i}},\nabla_{\bar{j}}]a_k &= -\overline{T^r_{ij}}\nabla_{\bar{r}}a_k
\end{split}\end{align}
and for a scalar function $f$, we have 
\begin{align}\begin{split}\label{f_comm_formula}
[\nabla_i,\nabla_j] f &= -T_{ij}^r \nabla_r f \\
[\nabla_{\bar{i}},\nabla_{\bar{j}}] f &= -\overline{T_{ij}^r}\nabla_{\bar{r}} f  .
\end{split}\end{align}

The \textit{Chern Laplacian} with respect to $g$ of a function $f$ is defined as
\begin{align*}
\Delta f = \tr_\omega \sqrt{-1}\partial\bar{\partial}f = g^{i\bar{j}}\partial_i \partial_{\bar{j}} f = g^{i\bar{j}} \nabla_i \nabla_{\bar{j}} f  .
\end{align*}

For a complex manifold, if we assume that 
\begin{align}
\partial\bar{\partial}\omega^k = 0 \ \text{  for  } \ k=1,2, \label{dd_assumption}
\end{align}
then in fact it vanishes for all $k=1,\ldots, n-1$, following from a straightforward computation. Under this assumption, 
 \begin{align*}
\int_X (\omega+\sqrt{-1}\partial\bar{\partial}\psi)^n = \int_X \omega^n
\end{align*}
for any $\psi\in \psh(X, \omega)$ where
\begin{align*}
\psh(X,\omega) = \{\varphi\in C^\infty (X): \omega+\sqrt{-1}\partial\bar{\partial}\varphi >0 \}
\end{align*} and ensures the vanishing of the integrals of Chern Laplacians of functions:
\begin{align*}
\int_X \Delta f \omega^n = n\int_X \sqrt{-1}\partial\bar{\partial} f \wedge\omega^{n-1} = n \int_X f \sqrt{-1}\partial\bar{\partial}\omega^{n-1} = 0 .
\end{align*}
Our assumption from \eqref{dd_assumption} also gives us that the average Chern scalar curvature quantity $\underline{\R}$ is invariant under addition of $\sqrt{-1}\partial\bar{\partial}\varphi$ for any smooth function $\varphi$ since 
\begin{align*}
\underline{\R(\tilde{\omega})} &= \tfrac{\int_X \R(\tilde{\omega})\tilde{\omega}^n}{\int_X\tilde{\omega}^n} = \tfrac{\int_X n \Ric(\tilde{\omega})\wedge \tilde{\omega}^{n-1}}{\int_X \tilde{\omega}^n}\\
&= \tfrac{n\int_X (\Ric(\omega)-\sqrt{-1}\partial\bar{\partial} F)\wedge (\omega+ \sqrt{-1}\partial\bar{\partial}\varphi)^{n-1}}{\int_X \tilde{\omega}^n}\\
&=\tfrac{n\int_X \Ric(\omega)\wedge \omega^{n-1}}{\int_X \omega^n}=\underline{\R(\omega)}=\underline{\R}
\end{align*}
does not depend on $\varphi$, where we used the fact that $\tilde{\omega}^n = e^F\omega^n$ and that the Chern-Ricci form is closed.

These properties will be necessary for the proofs in the later sections. Note that throughout this paper, the constants may vary from line to line.

\section{$C^0$ bounds on $F$ and $\varphi$ in terms of the entropy}
In this section, we prove that an upper bound on the entropy implies $C^0$ bounds for $\varphi$ and $F$. We follow the sequence of arguments of Chen-Cheng \cite{cc17} employing, where necessary, the non-K\"ahler generalizations of the original theorems.
In particular, we show:
\begin{lemma}
Let $(\varphi, F)$ be a smooth solution to \eqref{ccsc_eqn}, then there exists a $C$ depending only on  $(X,\omega)$ and an upper bound on $\emph{Ent}(\tilde{\omega},\omega)$ such that $||F||_0+||\varphi||_0\le C$.\label{entropy_bound_suff}
\end{lemma}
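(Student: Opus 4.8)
The plan is to adapt the Chen–Cheng argument \cite{cc17} for $C^0$ estimates to the Hermitian setting, isolating the three ingredients where K\"ahlerity is used and replacing each by its non-K\"ahler analogue. The equation \eqref{ccsc_eqn} gives $\tilde\Delta F = -\underline{\R} + \tr_{\tilde\omega}\Ric(\omega)$; normalizing $\varphi$ by $\sup_X\varphi = 0$ (or by fixing $\int_X\varphi\,\omega^n$), I expect to extract the $C^0$ bound on $\varphi$ from the bound on $F$ together with a stability/comparison-type estimate. First I would establish an upper bound for $F$. The key point is that $\int_X e^F\omega^n = \int_X\tilde\omega^n = \int_X\omega^n$ (using \eqref{dd_assumption}), so $F$ has controlled average; combined with the entropy bound $\int_X F\,e^F\omega^n \le C$ and the Green's function for the Chern Laplacian $\Delta$ of $\omega$ (which has the usual bounds since $\omega$ is a fixed background metric), one controls $\sup_X F$ in the standard way — here one uses that $\int_X \Delta f\,\omega^n = 0$ under \eqref{dd_assumption}, which is exactly the integral identity recorded in Section 2 that substitutes for the K\"ahler identity.

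Next I would bound $\varphi$ from below (equivalently $\|\varphi\|_0$ given the normalization). The strategy is: since $\tilde\omega^n = e^F\omega^n$ with $\sup F$ controlled, $\varphi$ solves a complex Monge–Amp\`ere-type inequality $(\omega + \sqrt{-1}\partial\bar\partial\varphi)^n \le e^{C}\omega^n$ with the volume normalization preserved by \eqref{dd_assumption}. Applying the non-K\"ahler Ko\l odziej-type $L^\infty$ estimate of Dinew–Ko\l odziej \cite{dk09} and B\l ocki \cite{blocki2} gives an oscillation bound $\osc_X\varphi \le C$ depending only on $(X,\omega)$ and $\sup F$. This is where the non-K\"ahler generalization of Yau's theorem \cite{cherrier, tw09} and the non-K\"ahler $\alpha$-invariant enter: one needs an exponential integrability bound $\int_X e^{-\alpha(\varphi - \sup\varphi)}\omega^n \le C$ for some uniform $\alpha > 0$, which follows from a Hermitian analogue of Tian's $\alpha$-invariant (this is uniform over $\psh(X,\omega)$ since $\omega$ is fixed), and this feeds the uniform estimate. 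Then the lower bound on $F$ comes out of the same circle of ideas: once $\osc\varphi$ is bounded, $\int_X e^F\omega^n = \int_X\omega^n$ forces $e^F$ not to be too small on a large set, and a further argument — again following \cite{cc17} and using the entropy to rule out concentration — upgrades this to a pointwise lower bound $\inf_X F \ge -C$.

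The main obstacle I anticipate is bookkeeping the failure of $\omega$ to be closed: in the K\"ahler case several integrations by parts are clean because $d\omega = 0$, whereas here every such step potentially produces torsion terms. The saving grace, and the reason for the hypothesis \eqref{dd_assumption}, is precisely that $\partial\bar\partial\omega^k = 0$ for $k=1,2$ guarantees $\int_X\Delta f\,\omega^n = 0$ and the invariance of $\int_X\tilde\omega^n$ and of $\underline{\R}$ — the handful of integral identities that the $C^0$ argument actually consumes. So I would organize the proof to use the PDE \eqref{ccsc_eqn} only through these integral identities plus maximum-principle / Green's-function inputs, citing \cite{cc17} for the structure and \cite{dk09, blocki2, cherrier, tw09} for the analytic estimates, and checking at each integration by parts that no uncontrolled torsion term survives. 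The gradient and higher-order work is deferred to later sections, so at this stage no pointwise curvature estimate is needed; the argument is genuinely potential-theoretic.
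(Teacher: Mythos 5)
The central gap is your first step: you claim that $\sup_X F$ is controlled ``in the standard way'' from the volume normalization $\int_X e^F\omega^n=\int_X\omega^n$, the entropy bound, and the Green's function of the background Chern Laplacian. This does not work. The entropy only gives $e^F\in L\log L$, an integral bound, and no pointwise upper bound on $F$ can follow from integral information alone; to use the Green's function of $\omega$ you would need control of $\Delta F$ (the Laplacian of the \emph{fixed} metric), whereas the equation only gives $\tilde{\Delta} F=-\underline{\R}+\tr_{\tilde{\omega}}\Ric(\omega)$, whose right-hand side involves $\tr_{\tilde{\omega}}\Ric(\omega)$ and is uncontrolled precisely until one knows the quasi-isometry the whole paper is trying to prove. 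This is exactly the difficulty that forces the Chen--Cheng mechanism, which the paper follows: apply the Hermitian Yau theorem (Corollary \ref{hermitian-yau}) to the density $e^F\sqrt{F^2+1}/I_F$ to produce an auxiliary potential $\psi$, then run the ABP maximum principle on $e^{\delta(F+\varepsilon\psi-\lambda\varphi)}\eta$; the entropy enters only through $I_F=\int_X e^F\sqrt{F^2+1}\,\omega^n$, and the $\alpha$-invariant (Proposition \ref{hermitian-tian}) is what bounds the resulting ABP integral. The output is not a sup bound on $F$ but the inequality $F+\varepsilon\psi-\lambda\varphi\le C$, which yields $e^F\in L^q$ for some $q>1$; only then do Dinew--Ko\l{}odziej/B\l{}ocki give $\|\varphi\|_0$ and $\|\psi\|_0$, and only after that does the upper bound on $F$ follow. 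Note also that $L\log L$ control of $e^F$ is not sufficient input for the Dinew--Ko\l{}odziej estimate, so you cannot shortcut to the oscillation bound on $\varphi$ either: the ABP step is what upgrades the entropy to the $L^q$ bound those results require.

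Your lower bound on $F$ is likewise not a proof as sketched: the normalization $\int_X e^F\omega^n=\int_X\omega^n$ together with any integral (entropy) bound is perfectly compatible with $\inf_X F=-\infty$ on a small set, so no ``non-concentration'' argument can produce a pointwise lower bound. The paper instead applies the maximum principle to $F+K\varphi$: from $\tilde{\Delta}(F+K\varphi)\le -\underline{\R}+Kn-(K-C)\tr_{\tilde{\omega}}\omega$ and the arithmetic--geometric mean inequality $\tr_{\tilde{\omega}}\omega\ge n e^{-F/n}$, one gets at a minimum point $0\le -\underline{\R}+Kn-n(K-C)e^{-F/n}$, hence a lower bound on $F$ in terms of $\|\varphi\|_0$. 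You correctly identified the role of \eqref{dd_assumption} (invariance of the volume and of $\underline{\R}$, vanishing of $\int_X\Delta f\,\omega^n$) and the right list of external ingredients \cite{cc17, tw09, dk09, blocki2}, but the two pointwise bounds on $F$ --- the heart of the lemma --- are missing, and the routes you indicate for them (Green's function from the entropy, and measure-theoretic non-concentration) would fail.
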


In particular, the proof relies on a non-K\"ahler generalization of Yau's theorem by Cherrier \cite{cherrier} and Tosatti-Weinkove \cite{tw09}, a non-K\"ahler generalization of Tian's $\alpha$-invariant and a result by Dinew-Ko\l{}odziej \cite{dk09} and B\l{}ocki \cite{blocki2} (see also \cite{kolodziej, blocki} for the original K\"ahler results).

The non-K\"ahler generalization of Yau's theorem proved by Tosatti-Weinkove \cite{tw09} can be stated as follows:
\begin{corollary}\label{hermitian-yau}
For every smooth real-valued function $G$ on $X$ there exist a unique real number $b$ and a unique smooth real-valued function $\psi$ on $X$ solving
\begin{align*}
&(\omega+\sqrt{-1}\partial\bar{\partial}\psi)^n = e^{G+b}\omega^n, \\
\text{with}& \ \ \omega + \sqrt{-1}\partial\bar{\partial}\psi >0 ,  \ \ \ \sup_X \psi = 0 .
\end{align*}
In particular, when $\partial\bar{\partial}\omega^k = 0$,  for $k=1,2$, then the constant $b$ must equal
\begin{align*}
\log \tfrac{\int_X \omega^n}{\int_X e^G \omega^n} .
\end{align*}
\end{corollary}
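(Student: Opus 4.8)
The plan is to treat the statement in two parts. The first part---existence of a unique real number $b$ and a unique smooth real-valued function $\psi$ with $\sup_X\psi=0$ solving $(\omega+\sqrt{-1}\partial\bar\partial\psi)^n=e^{G+b}\omega^n$ with $\omega+\sqrt{-1}\partial\bar\partial\psi>0$---is nothing more than the Hermitian generalization of the Calabi--Yau theorem established by Cherrier \cite{cherrier} and Tosatti--Weinkove \cite{tw09}, which holds for an \emph{arbitrary} Hermitian metric $\omega$, with no closedness hypothesis. I would simply invoke it: the argument there runs the continuity method, reducing everything to a priori $C^0$, $C^2$ and $C^{2,\alpha}$ estimates for the complex Monge--Amp\`ere equation in the Hermitian setting, the point of departure from Yau's original proof being the extra torsion terms appearing when $d\omega\neq 0$. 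Uniqueness of the pair $(b,\psi)$ follows from the maximum principle applied at the extrema of the difference of two solutions: comparing the equations there forces the two constants to agree, hence the two potentials to differ by a constant, and the normalization $\sup_X\psi=0$ then forces equality. No new input is required for this part.

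The second part is the identification $b=\log\frac{\int_X\omega^n}{\int_X e^G\omega^n}$ under the assumption $\partial\bar\partial\omega^k=0$ for $k=1,2$, and I would obtain it purely by integration. Since the solution $\psi$ produced above lies in $\psh(X,\omega)$, the volume identity recorded in Section 2---a consequence of $\partial\bar\partial\omega^k=0$ for all $k\le n-1$, which itself follows from the $k=1,2$ cases---gives
\[
\int_X(\omega+\sqrt{-1}\partial\bar\partial\psi)^n=\int_X\omega^n .
\]
Integrating both sides of the Monge--Amp\`ere equation over $X$ then yields
\[
\int_X\omega^n=\int_X(\omega+\sqrt{-1}\partial\bar\partial\psi)^n=e^b\int_X e^G\omega^n ,
\]
and solving for $b$ gives the asserted value; in particular $b$ depends only on $\omega$ and $G$, not on $\psi$.

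The genuinely hard content---the a priori estimates underlying the Hermitian Calabi--Yau theorem---is entirely contained in the first part, but since that theorem is already available in the literature it poses no obstacle for us here. The only real step left is the short integration in the second part, and its sole nontrivial ingredient, the volume-invariance identity $\int_X(\omega+\sqrt{-1}\partial\bar\partial\psi)^n=\int_X\omega^n$, has already been established in Section 2 from the assumption \eqref{dd_assumption}.
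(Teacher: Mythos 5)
Your proposal is correct and follows essentially the same route as the paper: the existence and uniqueness of $(b,\psi)$ is taken directly from the Hermitian Calabi--Yau theorem of Cherrier and Tosatti--Weinkove, and the identification $b=\log\tfrac{\int_X\omega^n}{\int_X e^G\omega^n}$ comes from integrating the equation and using the volume invariance $\int_X(\omega+\sqrt{-1}\partial\bar\partial\psi)^n=\int_X\omega^n$ established in Section 2 under the assumption \eqref{dd_assumption}. No discrepancies to report.
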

The following lemma by H\"ormander (see Proposition 4.2.9 in \cite{hormander2}) will be needed in the proof of the non-K\"ahler generalization of Tian's $\alpha$-invariant:
\begin{lemma}\label{hormander}
There exists a constant $C$ such that for every $\psi\in C^\infty(X)$ satisfying $\sqrt{-1}\partial\bar{\partial}\psi\ge 0$ and $\psi(z)\le 0$ in $\{|z|<1\}\subset\mathbb{C}^n$ with $\psi(0)\ge -1$, we have
\begin{align*}
\int\limits_{\{|z|<1/2\}} e^{-\psi(z)}d\lambda(z)\le C .
\end{align*}
\end{lemma}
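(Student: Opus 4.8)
The statement is purely local and Euclidean, so the plan is to first reduce it to the case $n=1$ by slicing $B_1\subset\mathbb C^n$ by complex lines. Writing $z=\zeta v$ with $\zeta\in\mathbb C$ and $v$ in the unit sphere $S^{2n-1}\subset\mathbb C^n$, one has the complex–polar coordinate identity
\[
\int_{\{|z|<1/2\}}e^{-\psi}\,d\lambda \;=\;\frac1{2\pi}\int_{S^{2n-1}}\Bigl(\int_{\{|\zeta|<1/2\}}e^{-\psi(\zeta v)}\,|\zeta|^{2n-2}\,dA(\zeta)\Bigr)d\sigma(v)\;\le\;\frac1{2\pi}\int_{S^{2n-1}}\int_{\{|\zeta|<1/2\}}e^{-\psi(\zeta v)}\,dA(\zeta)\,d\sigma(v),
\]
since $|\zeta|^{2n-2}\le1$ for $|\zeta|<1/2$. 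For each fixed $v$ the slice $\zeta\mapsto\psi(\zeta v)$ is subharmonic on the unit disc $\mathbb D$, is $\le0$ there, and equals $\psi(0)\ge-1$ at the origin. Hence it suffices to prove the one–dimensional version: there is a universal constant $C_1$ such that every subharmonic $u$ on $\mathbb D$ with $u\le0$ and $u(0)\ge-1$ satisfies $\int_{\{|\zeta|<1/2\}}e^{-u}\,dA\le C_1$; integrating over $S^{2n-1}$ then gives the lemma.

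For the one–dimensional statement the plan is to use the Riesz (Poisson–Jensen) decomposition. After exhausting $\mathbb D$ by slightly smaller discs one writes $u=h-p$ on $\mathbb D$, where $h$ is harmonic with $h\le0$ and $p(\zeta)=\int_{\mathbb D}g(\zeta,w)\,d\mu(w)\ge0$ is the Green potential of the Riesz measure $\mu$ of $u$, with $g(\zeta,w)=\tfrac1{2\pi}\log\bigl|\tfrac{1-\bar w\zeta}{\zeta-w}\bigr|$. Evaluating at $0$ gives $h(0)=u(0)+p(0)\ge-1$ and $p(0)=h(0)-u(0)\le1$; the second inequality reads $\tfrac1{2\pi}\int_{\mathbb D}\log\tfrac1{|w|}\,d\mu(w)\le1$, which forces $\mu(\overline{\mathbb D_r})\le\tfrac{2\pi}{\log(1/r)}$ for every $r<1$. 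Since $-h\ge0$ is harmonic with $(-h)(0)\le1$, Harnack's inequality bounds $-h$ on $\{|\zeta|\le1/2\}$, so $e^{-h}$ is bounded there, and it remains to bound $\int_{\{|\zeta|<1/2\}}e^{p}\,dA$. Here I would split $\mu=\mu_1+\mu_2$ with $\mu_1=\mu|_{\overline{\mathbb D_{1/2+\varepsilon}}}$ for a fixed $\varepsilon$ with $1/2+\varepsilon<e^{-1/2}$, and $p=p_1+p_2$ accordingly. For $p_2$, a direct estimate for the explicit Green function (both $g(\zeta,\cdot)$ and $g(0,\cdot)$ vanish on $\partial\mathbb D$) gives $g(\zeta,w)\le C_\varepsilon\,g(0,w)$ when $|\zeta|\le1/2$ and $|w|\ge1/2+\varepsilon$, whence $p_2\le C_\varepsilon\int g(0,w)\,d\mu\le C_\varepsilon$. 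For $p_1$, the elementary bound $g(\zeta,w)\le\tfrac1{2\pi}\log^+\tfrac1{|\zeta-w|}+C$ together with Jensen's inequality applied to $\mu_1/\mu_1(\mathbb D)$ reduces matters to bounding $\int_{\{|\zeta|<1/2\}}|\zeta-w|^{-\mu_1(\mathbb D)/2\pi}\,dA(\zeta)$ uniformly in $w\in\overline{\mathbb D_{1/2+\varepsilon}}$, which is finite precisely because $\mu_1(\mathbb D)/2\pi\le\bigl(\log\tfrac1{1/2+\varepsilon}\bigr)^{-1}<2$. Combining the three estimates yields $\int_{\{|\zeta|<1/2\}}e^{-u}\,dA\le C_1$.

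The step I expect to be the main obstacle is the exponential bound on the Green potential $p$ with exponent exactly $1$. A cruder argument, or running the Riesz decomposition directly in $\mathbb C^n$ with the real Laplacian, would only give exponential integrability with some small exponent $\alpha_n$, since in dimension $\ge2$ the real Riesz measure of a plurisubharmonic function does not reflect the merely logarithmic nature of its worst singularities. Slicing by complex lines at the outset is what makes the sharp numerology $\mu_1(\mathbb D)/2\pi<2$ available — and it is exactly the inequality $1/2<e^{-1/2}$ that keeps this exponent below the critical value. This part of the argument is essentially Proposition 4.2.9 of \cite{hormander2}.
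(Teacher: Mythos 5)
The paper offers no proof of this lemma at all: it is quoted as a known result of H\"ormander (Proposition 4.2.9 of the cited reference), so there is no internal argument to compare against. Your proposal is correct and is essentially the standard proof of that result: slicing by complex lines through the origin reduces the claim to the one-variable subharmonic statement, and there the Riesz decomposition $u=h-p$, the mass bound $\mu(\{|w|\le r\})\le 2\pi/\log(1/r)$ forced by $p(0)\le 1$, Harnack's inequality for $-h$, and Jensen's inequality applied to the Green potential of $\mu_1$ (with the exponent $\mu_1(\mathbb{D})/2\pi<2$ secured by choosing $1/2+\varepsilon<e^{-1/2}$) yield the uniform bound with exponent exactly $1$, uniformly in the slicing direction.
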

We are now ready to provide a proof of the generalized Tian's $\alpha$-invariant for Hermitian metrics for the convenience of the reader and which we believe is known to experts:
\begin{prop}
Given $(X,\omega)$ a Hermitian manifold, there exist constants $\alpha>0$ and $C>0$ depending only on $(X,\omega)$ such that 
 \begin{align*}
\int_X e^{-\alpha(\psi - \sup_X \psi)}\omega^n \le C
\end{align*}
for all $\psi \in \psh(X,\omega)$.\label{hermitian-tian}
\end{prop}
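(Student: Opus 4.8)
The plan is to reduce the global statement to a local one by a compactness argument, exactly as in the classical proof of Tian's $\alpha$-invariant (as presented, e.g., in Tian's book or in the exposition in \cite{hormander2}), and then to apply Lemma~\ref{hormander} on small coordinate balls. The non-K\"ahlerity of $\omega$ plays essentially no role here, because the hypothesis $\omega + \sqrt{-1}\partial\bar\partial\psi \ge 0$ forces $\sqrt{-1}\partial\bar\partial\psi \ge -\omega$, and on a fixed compact manifold $\omega$ is comparable to the Euclidean form in any finite atlas of coordinate charts; so after subtracting a fixed local potential for $\omega$ one is dealing with genuinely plurisubharmonic functions.

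First I would normalize: replacing $\psi$ by $\psi - \sup_X \psi$ we may assume $\sup_X \psi = 0$, so it suffices to bound $\int_X e^{-\alpha\psi}\omega^n$ uniformly over $\psi \in \psh(X,\omega)$ with $\sup_X \psi = 0$. Cover $X$ by finitely many coordinate balls $B(p_a, 1) \subset \mathbb{C}^n$, $a = 1, \dots, N$, such that the balls $B(p_a, 1/2)$ still cover $X$, and such that on each $B(p_a,1)$ there is a smooth $\rho_a$ with $\omega = \sqrt{-1}\partial\bar\partial \rho_a$ on that chart and $|\rho_a| \le C_0$ for a constant $C_0$ depending only on $(X,\omega)$ and the choice of atlas. (Such local potentials exist because $\omega$ is $d$-closed... which it need not be; instead one uses that any smooth real $(1,1)$-form is locally $\sqrt{-1}\partial\bar\partial$ of a smooth function only when it is $\partial\bar\partial$-closed — here we do have $\partial\bar\partial\omega = 0$ by hypothesis \eqref{dd_assumption}, so local $\partial\bar\partial$-potentials exist by the local $\partial\bar\partial$-lemma. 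Alternatively, and more robustly, one does not need an exact potential: choose $\rho_a$ smooth with $\sqrt{-1}\partial\bar\partial\rho_a \ge \omega$ on $B(p_a,1)$, which always exists since $\omega$ is a fixed smooth form and one may take $\rho_a = A|z|^2$ for $A$ large depending only on $\sup_X$ of the coefficients of $\omega$ in the charts.) Then $u_a := \psi - \rho_a$ satisfies $\sqrt{-1}\partial\bar\partial u_a \le \sqrt{-1}\partial\bar\partial\psi + \omega$... wait, I want $u_a$ plurisubharmonic, so take $\rho_a$ with $\sqrt{-1}\partial\bar\partial\rho_a \le -\omega$ is impossible for positive $\omega$; instead set $u_a := \psi + \rho_a$ with $\sqrt{-1}\partial\bar\partial\rho_a \ge \omega$, giving $\sqrt{-1}\partial\bar\partial u_a = \sqrt{-1}\partial\bar\partial\psi + \sqrt{-1}\partial\bar\partial\rho_a \ge -\omega + \omega = 0$, so $u_a$ is psh on $B(p_a,1)$, and since $\psi \le 0$ we get $u_a \le C_0$ there.

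The heart of the argument is the standard normalization lemma: there is a constant $C_1 = C_1(X,\omega) $ and, for each $\psi$, an index $a$ such that $\sup_{B(p_a,1/2)} \psi \ge -C_1$ — indeed $\sup_X \psi = 0$ is attained at some point $q$, $q$ lies in some $B(p_a, 1/2)$, and then $\psi(q) = 0 \ge -C_1$. On that chart $v := u_a - C_0 \le 0$ and $v(q) = \psi(q) + \rho_a(q) - C_0 \ge -2C_0 \ge -1$ after rescaling $\rho_a$ (or after covering by smaller balls), so Lemma~\ref{hormander} applies with $\psi \rightsquigarrow v$ and gives $\int_{B(p_a,1/2)} e^{-v}\,d\lambda \le C$, equivalently $\int_{B(p_a,1/2)} e^{-\psi}\,\omega^n \le e^{C_0}\sup|\det(g)/\text{Eucl}| \cdot C$. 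This controls the integral near the maximum point. To propagate the bound to all of $X$ one uses the sub-mean-value / Harnack-type inequality for psh functions together with the connectedness of $X$: covering $X$ by a chain of overlapping balls and using that $\int_{B'} e^{-\psi}$ bounded on one ball of the chain, plus the uniform bound $\sup_{B} u_a \le C_0$ and a bound below of the form $\sup_{B'} u_a \ge \sup_B u_a - C$ coming from the sub-mean-value property on overlapping balls, yields after finitely many steps $\int_X e^{-\psi}\omega^n \le C(X,\omega)$. Finally, since all these inequalities are linear in the exponent, the same argument run with $\alpha\psi$ in place of $\psi$ works provided $\alpha$ is small enough that the chaining constants stay controlled; one fixes such an $\alpha = \alpha(X,\omega) > 0$.

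I expect the main obstacle to be bookkeeping rather than anything conceptually non-K\"ahler: precisely, organizing the chain-of-balls propagation so that the constant depends only on $(X,\omega)$ and not on $\psi$, and making sure the rescalings needed to meet the hypotheses of Lemma~\ref{hormander} (the $\psi(0) \ge -1$ and $\{|z|<1\}$ normalizations) are absorbed into finitely many choices fixed once and for all. A clean way to package the propagation step is to invoke the well-known fact that on a compact Hermitian manifold the set $\{\psi \in \psh(X,\omega) : \sup_X\psi = 0\}$ is compact in $L^1(X,\omega^n)$ (by the standard potential-theoretic estimates for $\omega$-psh functions, which hold verbatim in the Hermitian setting since they are local), so $\int_X e^{-\psi}\omega^n$ is finite for each $\psi$ and upper semicontinuous in $\psi$, hence bounded; combined with the quantitative local estimate from Lemma~\ref{hormander} one upgrades this to the uniform exponential integrability with a fixed $\alpha$. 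The non-K\"ahler hypothesis $\partial\bar\partial\omega^k = 0$ is not actually needed for this proposition — only the existence of local quasi-potentials, which holds for any smooth metric — so I would state the proof without invoking \eqref{dd_assumption}.
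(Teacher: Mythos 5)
Your argument is correct in outline and lands on the same core mechanism as the paper: localize to coordinate balls and apply Lemma~\ref{hormander} at a point where $\psi$ is bounded below by a uniform constant. The difference is in how that good point is produced on \emph{every} ball of the cover. The paper simply cites the uniform $L^1$ bound for normalized $\omega$-psh functions from Tosatti--Weinkove (see also Proposition~2.1 of Dinew--Ko\l{}odziej), which immediately yields a point $y_i\in B_r(x_i)$ with $\psi(y_i)\ge -C$ in each ball, after which Hörmander's lemma is applied ball by ball and the estimates are summed; no chaining is needed. You instead start from the maximum point of $\psi$ and propagate the lower bound through a chain of overlapping balls via the sub-mean-value inequality --- this is essentially a re-proof of the cited $L^1$ bound, so the two routes are equivalent in substance, with the paper trading your bookkeeping for a citation. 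Two remarks: first, your explicit replacement of $\psi$ by the genuinely psh function $\psi+A|z|^2$ before invoking Lemma~\ref{hormander} is a point where you are more careful than the paper, which applies the lemma to $\psi/C$ directly even though the lemma as stated requires $\sqrt{-1}\partial\bar{\partial}\psi\ge 0$; and you are right that neither proof uses \eqref{dd_assumption}. Second, your alternative ``clean packaging'' --- $L^1$-compactness of the normalized family plus upper semicontinuity of $\psi\mapsto\int_X e^{-\psi}\omega^n$, hence boundedness --- is not a valid shortcut as stated: that semicontinuity/uniform integrability is itself a nontrivial theorem (of Demailly--Kollár type), and compactness plus finiteness for each fixed smooth $\psi$ does not give a uniform $\alpha$. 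Since your primary chaining argument does not rely on that aside, the proof stands, but the aside should be dropped or properly attributed. Also note that the propagation inequality is more naturally multiplicative than additive ($\sup_{B'}u\ge -C\,|{\sup_B u}|$ rather than $\sup_B u - C$), which is harmless here because the chain has finitely many links fixed by the cover.
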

\begin{proof}
Following the argument by Tian \cite{tian87}, let us cover $X$ with $N$ geodesic balls $B_{16r}(x_i)$ with respect to $\omega$ such that $\cup_i B_r (x_i)$ covers $X$, with $N$ and $r$ uniform. Let us assume that each $B_{16r}(x_i)$ is contained in a holomorphic coordinate chart, $(U, \{z^j\})$, rescaled in $r$ and $\{z^j\}$ so that for all $w\in B_r(x_i)$, we have that
\begin{align*}
B_{2r}(w) \subset \{|z-w|\le 1/2\} \subset B_{4r}(w) \subset \{|z-w|\le 1\} \subset B_{8r}(w).
\end{align*}
By a result in \cite{tw09} (see also Proposition 2.1 in \cite{dk09}), having $\sup_X \psi = 0$ and $\psi\in \psh(X,\omega)$ implies that there is a uniform $L^1$ bound on $\psi$ in $B_{16r}(x_i)$. Hence, there exists a point $y_i\in B_r(x_i)$ such that
\begin{align*}
\psi(y_i)\ge -C 
\end{align*}
for a uniform $C$.
Then, we have that
\begin{align*}
B_r(x_i)\subset B_{2r}(y_i) \subset \{|z-y_i|\le 1/2\}\subset \{|z-y_i|\le 1\} \subset B_{8r}(y_i)\subset B_{16r}(x_i)
\end{align*}
and, in particular, on $\{|z-y_i|<1\}$ we have that 
\begin{align*}
\tfrac{\psi(y_i)}{C} \ge -1 \ \ \ \ \ \tfrac{\psi}{C} \le 0 .
\end{align*}
By the result by H\"ormander (Lemma \ref{hormander}), it follows that 
\begin{align*}
\int\limits_{|z-y_i|<1/2} e^{-\psi(z)/C}d\lambda(z)\le C .
\end{align*}
From this, we obtain that
\begin{align*}
\int\limits_{B_r(x_i)} e^{-\psi(z)/C}d\lambda(z) \le \int\limits_{\{|z-y_i|<1/2\}} e^{-\psi(z)/C}d\lambda(z) \le C .
\end{align*}
Since this holds on each of the $N$ balls with which we have covered $X$, we are done.
\end{proof}

Given Corollary \ref{hermitian-yau}, Proposition \ref{hermitian-tian} and a result by Dinew-Ko\l{}odziej \cite{dk09} and B\l{}ocki \cite{blocki2}, Lemma \ref{entropy_bound_suff} follows verbatim from \cite{cc17}. We provide here a proof for the convenience of the reader.
\begin{proof} (of Lemma \ref{entropy_bound_suff}) 
Firstly, we will normalize $\varphi$ so that $\sup_X \varphi =0$ and $\omega$ such that $\int_X \omega^n = 1$. Then, taking $G=F\log\sqrt{F^2+1}$, we have by Corollary \ref{hermitian-yau} and the assumption in \eqref{dd_assumption} that there exists a unique function $\psi$ solving
\begin{align*}
(\omega+\sqrt{-1}\partial\bar{\partial}\psi)^n &= e^{G+b}\omega^n = \tfrac{e^F\sqrt{F^2+1}\omega^n}{\int_X e^F\sqrt{F^2+1}\omega^n}
\end{align*}
with $\omega+\sqrt{-1}\partial\bar{\partial}\psi >0, \ \ \sup_X \psi = 0$. By Proposition \ref{hermitian-tian}, there exists $\alpha>0$ such that
\begin{align}
\int_X e^{-\alpha\varphi}\omega^n \le C , \ \ \ \ \int_X e^{-\alpha\psi}\omega^n \le C . \label{a-invariant}
\end{align}
Let $\varepsilon,\delta,\theta \in (0,1)$ be constants to be determined. Let $p\in X$ and choose a coordinate ball $B(p)$. Let $\eta$ be a smooth cut-off function on $X$ such that $1-\theta\le\eta\le 1$ with $$\eta(p)=1, \ \eta|_{\partial B}=1-\theta, \ |\partial\eta|_\omega^2=O(\theta^2), \ |\nabla^2\eta|_{\omega}=O(\theta).$$ Let $Q:=e^{\delta(F+\varepsilon\psi-\lambda\varphi)}$ and $A:=\delta(F+\varepsilon\psi-\lambda\varphi)$. Assume that $Q$ attains a maximum at $p\in X$. In order to apply the Alexandrov-Bakelman-Pucci (ABP) maximum principle (see Lemma 9.3 in \cite{gt}), we need to compute
\begin{align*}
e^{-A}\tilde{\Delta}(Q\eta)&=(\tilde{\Delta}A + |\partial A|^2_{\tilde{\omega}})\eta + \tilde{\Delta}\eta + 2\text{Re}(\tilde{g}^{i\bar{j}}A_i\eta_{\bar{j}})
\end{align*}
\begin{align*}
\tilde{\Delta}A&= \delta\big(-\underline{\R}+\tr_{\tilde{\omega}} \Ric(\omega) +\varepsilon \tilde{g}^{i\bar{j}}(g_\psi)_{i\bar{j}}-\varepsilon \tilde{g}^{i\bar{j}}g_{i\bar{j}} -\lambda n + \lambda \tilde{g}^{i\bar{j}}g_{i\bar{j}}\big)\\
&\ge \delta\big(-(\underline{\R}+\lambda n) + (\lambda - C-\varepsilon) \tr_{\tilde{\omega}} \omega + \varepsilon n(\sqrt{F^2+1}I_F^{-1})^{1/n}\big)
\end{align*}
where we used the fact that $$\tilde{g}^{i\bar{j}}(g_\psi)_{i\bar{j}}\ge n\Big(\tfrac{\omega_\psi^n}{\tilde{\omega}^n}\Big)^{\tfrac{1}{n}}=n(\sqrt{F^2+1}I_F^{-1})^{\tfrac{1}{n}}$$ where $I_F = \int_X e^F \sqrt{F^2+1}\omega^n$. We have the following bounds:
\begin{align*}
\tilde{\Delta} \eta &\ge -(\tr_{\tilde{\omega}} \omega)O(\theta)\\
2\text{Re}(\tilde{g}^{i\bar{j}}A_i\eta_{\bar{j}}) &\ge -\eta|\partial A|^2_{\tilde{\omega}} -\tfrac{(\tr_{\tilde{\omega}} \omega) O(\theta^2)}{\eta} .
\end{align*}
Combining these inequalities together, and choosing $\lambda$ sufficiently large, $\delta$ such that $2n\delta\lambda=\alpha$ and $\theta$ small compared to $\delta$, we have
\begin{align*}
e^{-A}\tilde{\Delta} (Q\eta)&\ge \delta\eta(-\underline{\R}-\lambda n +n\varepsilon(\sqrt{F^2+1}I_F^{-1})^{1/n})  +\delta\eta(\tr_{\tilde{\omega}}\omega \big(\lambda-C-\varepsilon)\big)\\
& \ \ \ \ -\tr_{\tilde{\omega}}\omega\Big(O(\theta)+\tfrac{O(\theta^2)}{\eta}\Big)\\
&\ge \delta\eta(-\underline{\R}-\lambda n +\varepsilon n(\sqrt{F^2+1})^{1/n}I_F^{-1/n}) . 
\end{align*}

Applying ABP to $Q\eta=e^{\delta(F+\varepsilon\psi-\lambda\varphi)}\eta$, we have
\begin{align}\begin{split}
\sup_B & Q\eta \le \sup_{\partial B} Q\eta\\
&  + C_n\Big(\int_B \delta Q^{2n}e^{2F}((-\underline{\R}-\lambda n+\varepsilon n(\sqrt{F^2+1}I_F^{-1})^{1/n})^{-}) ^{2n} \omega^n\Big)^{1/2n} . \label{int}
\end{split}
\end{align}
The integral vanishes except when $-\underline{\R}-\lambda n+\varepsilon n(\sqrt{F^2+1}I_F^{-1}){1/n}<0$. By the positivity of $(\sqrt{F^2+1})^{1/n}$ and $I_F^{-1/n}$, we find that the integral on the right-hand side of \eqref{int} is bounded above by
\begin{align*}
\int\limits_{B\cap\{F\le C\}} \delta e^{2n\delta(F+\varepsilon\psi-\lambda\varphi)}e^{2F}(|\underline{\R}|+\lambda n) \omega^n &\le C\int\limits_{B\cap\{F\le C\}} e^{2n\delta(\varepsilon\psi-\lambda\varphi)} \omega^n\\
& \le C\int\limits_X e^{-2n\delta\lambda\varphi}\omega^n = C\int\limits_X e^{-\alpha\varphi}\omega^n \le C
\end{align*}
since $\psi\le 0$ and by applying \eqref{a-invariant}, where $C$ depends on $\varepsilon$ and $I_F$. This gives us that 
\begin{align}
Q(p) = \sup_X Q \le (1-\theta)\sup_X Q + C\Rightarrow F+\varepsilon\psi-\lambda\varphi\le C . \label{F_party_bound}
\end{align}
Now, in order to arrive at an upper bound on $F$, it suffices to prove $C^0$ bounds on $\psi$ and $\varphi$. A bound on $\varphi$ can be accomplished by showing that $e^F = \tfrac{\tilde{\omega}^n}{\omega^n} \in L^q(X)$ for $q>1$ and using a result of Dinew-Ko\l{}odziej \cite{dk09} and B\l{}ocki \cite{blocki2}.
By \eqref{a-invariant}, \eqref{F_party_bound} and the fact that $\varphi\le 0$, we have that
\begin{align*}
\int_X e^{\alpha F/\varepsilon}\omega^n \le \int_X e^{\alpha(C-\varepsilon\psi+\lambda\varphi)/\varepsilon}\omega^n\le \int_X e^{-\alpha\psi}\omega^n \le C .
\end{align*}
Choosing $\varepsilon$ such that $\varepsilon = \tfrac{\alpha}{q}$ for $q>1$, we arrive at an $L^q$ bound for $e^F$ which by the previously stated result gives us $||\varphi||_0\le C$. We also have that $$\tfrac{\omega_\psi^n}{\omega^n} = \tfrac{e^F\sqrt{F^2+1}}{\int_X e^F\sqrt{F^2+1}\omega^n} \le Ce^{qF}$$ for some $C>0$ and $q>1$ and so by the same argument, we also obtain bounds on $||\psi||_0$. Since $I_F$ can be bounded from above in terms of $\text{Ent}(\tilde{\omega},\omega)$, the dependence of the constant on $I_F$ passes over to $\text{Ent}(\tilde{\omega},\omega)$. Thus, we have shown an upper bound on $F$, as well as a $C^0$ bound on $\varphi$, as desired.

It remains to show a lower bound on $F$. For $K>0$ to be determined, we can compute
\begin{align*}
\tilde{\Delta}(F+K\varphi) &= -\underline{\R} + \tilde{g}^{i\bar{j}} R_{i\bar{j}}+ K\tilde{g}^{i\bar{j}}\varphi_{i\bar{j}} \le -\underline{\R} + Kn -(K-C)\tilde{g}^{i\bar{j}} g_{i\bar{j}}.
\end{align*}
Choosing $K>C$ and using the arithmetic-geometric mean inequality $$\tr_{\tilde{\omega}} \omega \ge n\big(\tfrac{\omega^n}{\tilde{\omega}^n}\big)^{\tfrac{1}{n}}=ne^{-\tfrac{F}{n}}\ ,$$ we find that at a minimum $p_0$ of $F+K\varphi$, we have
\begin{align*}
0\le -\underline{\R}+Kn - n(K-C)e^{-\tfrac{F}{n}}
\end{align*}
giving us the desired lower bound for $F$ in terms of $||\varphi||_0$ which can be bounded in terms of $(X,\omega)$ and $\text{Ent}(\tilde{\omega},\omega)$.
\end{proof}
\begin{remark}
In the paper by Chen-Cheng \cite{cc17}, they also bound the entropy in terms of $||\varphi||_0$ using the fact that a cscK metric is a minimizer of K-energy. In the Hermitian case, it is not known whether there exists a notion of K-energy and so it is unclear whether such an implication should hold.
\end{remark}
\section{Gradient bound on the potential}
In this section, we prove a bound on $|\partial\varphi|^2_\omega$ by applying a maximum principle argument to a modified quantity from that of Chen-Cheng \cite{cc17}. This gradient term appears in the computation for proving bounds on the $L^p$ norms of $\tr_\omega \tilde{\omega}$. We use the fact that we have secured $C^0$ bounds on $F$ and $\varphi$ depending only on $(X,\omega)$ and the entropy, as shown in the last section.
\begin{lemma}\label{gradient_varphi_bound}
Let $(\varphi, F)$ be a smooth solution to \eqref{ccsc_eqn}. Then there exists a constant $C$ depending only on $(X,\omega)$ and $\emph{Ent}(\tilde{\omega},\omega)$ such that
 \begin{align}\begin{split}
|\partial \varphi|^2_\omega\le C .
\end{split} \end{align}
\end{lemma}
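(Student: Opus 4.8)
The plan is to apply the maximum principle to a quantity of the form $Q = \log |\partial\varphi|^2_\omega - \gamma(\varphi)$ for a suitably chosen function $\gamma$, mimicking the Chen--Cheng argument but with correction terms to absorb torsion. More precisely, I would set $u = |\partial\varphi|^2_\omega = g^{i\bar{j}}\varphi_i \varphi_{\bar{j}}$ and attempt to compute $\tilde{\Delta}(\log u)$ (or $\tilde{\Delta} u$, dividing by $u$ afterwards), exploiting the $C^0$ bounds on $F$ and $\varphi$ from Lemma \ref{entropy_bound_suff}. The first step is the Bochner-type computation: differentiating $u$ twice and tracing with $\tilde{g}^{i\bar j}$ produces a good positive term $\tilde{g}^{i\bar{j}}g^{k\bar{\ell}}(\nabla_i\nabla_k\varphi)\overline{(\nabla_j\nabla_\ell\varphi)}$ plus its conjugate, terms involving $\tilde{\Delta}$ applied to $\varphi_i$ and $\varphi_{\bar j}$ which via \eqref{ccsc_eqn} and commutation formulas \eqref{a_comm_formula}, \eqref{f_comm_formula} reduce to curvature of $\omega$, $\nabla\R$-type terms (here actually $\nabla(\tr_{\tilde\omega}\Ric(\omega))$ and $\nabla\underline{\R}=0$), plus torsion terms $T$, $\nabla T$ coupling $\partial\varphi$ with the Hessian of $\varphi$. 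The key new feature compared to the Kähler case is the appearance of terms like $\tilde g^{i\bar j} T_{ik}^r \nabla_r\varphi\, \nabla_{\bar j}\nabla_{\bar\ell}\varphi\, g^{k\bar\ell}$ coming from $[\nabla_i,\nabla_k]$, and their conjugates, which are not present in \cite{cc17}.

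The second step is to control these torsion terms. Since $|T|_\omega$ is bounded by $(X,\omega)$, a term like $\tilde g^{i\bar j} T_{ik}^r \varphi_r \, \overline{\nabla_j\nabla_\ell\varphi}\, g^{k\bar\ell}$ is schematically $\tilde g \cdot T \cdot \partial\varphi \cdot \overline{\nabla^2\varphi}$; by Cauchy--Schwarz it is bounded by $\epsilon \tilde g^{i\bar j}g^{k\bar\ell}(\nabla_i\nabla_k\varphi)\overline{(\nabla_j\nabla_\ell\varphi)} + C_\epsilon |T|^2_\omega\, u\, \tr_{\tilde\omega}\omega$, so half of it is absorbed by the good second-derivative term and the remainder is of the form $C\, u\, \tr_{\tilde\omega}\omega$. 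This is exactly the type of term that the $-\gamma(\varphi)$ piece is designed to kill: choosing $\gamma(\varphi) = -A\varphi$ (or $\gamma(\varphi)=e^{-A\varphi}$) with $A$ large, $\tilde{\Delta}(-\gamma(\varphi))$ contributes a term $\sim A\tilde{g}^{i\bar j}g_{i\bar j} = A\tr_{\tilde\omega}\omega$ up to bounded error (using $\tilde g^{i\bar j}\varphi_{i\bar j} = n - \tr_{\tilde\omega}\omega$ and the $C^0$ bound on $\varphi$), which after dividing the whole inequality by $u$ and being careful about whether we took $\log u$ or $u$, dominates the bad $C\,\tr_{\tilde\omega}\omega$ contributions. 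One also has to handle the cross term $2\mathrm{Re}(\tilde g^{i\bar j}(\log u)_i\gamma'(\varphi)\varphi_{\bar j})$ at the maximum, where $\partial Q = 0$ forces $(\log u)_i = \gamma'(\varphi)\varphi_i$, turning it into a manageable term.

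The third step is the conclusion: at an interior maximum $p$ of $Q$, the inequality $\tilde{\Delta} Q(p) \le 0$ combined with the above gives $0 \le -c\,\tr_{\tilde\omega}\omega(p) + C + (\text{terms involving } u(p)^{-1}\cdot\text{curvature etc.})$; since $\tr_{\tilde\omega}\omega \ge n(\tilde\omega^n/\omega^n)^{-1/n} = n e^{-F/n}$ is bounded below using the $C^0$ bound on $F$, one should be able to force either $\tr_{\tilde\omega}\omega(p)$ bounded — which does not immediately bound $u(p)$ — so more likely the scheme produces directly a bound $u(p) \le C(1 + \text{something bounded})$ from the structure of the good term versus the gradient-squared terms. (This is why the precise form of the modified quantity, e.g. including also a multiple of $F$ or of $\varphi$ inside, matters; I expect a quantity like $Q = \log(|\partial\varphi|^2_\omega) - A\varphi + B$ something, tuned so that the coefficient bookkeeping closes.) Then $|\partial\varphi|^2_\omega(x) \le u(p)\,e^{\gamma(\varphi(x)) - \gamma(\varphi(p))} \le C$ for all $x$ by the $C^0$ bound on $\varphi$. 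The main obstacle I anticipate is precisely the bookkeeping in steps one and two: the non-Kähler Bochner formula generates several torsion terms (from $[\nabla,\nabla]$ on one-forms, from $\nabla T$, and from the fact that $\Ric$ here is Chern-Ricci so various Kähler symmetries fail), and one must check that every one of them is either absorbed into the good Hessian term by Cauchy--Schwarz at small cost, or is of the benign form $C\,u\,\tr_{\tilde\omega}\omega$ or $C\,u$ or $C\,\tr_{\tilde\omega}\omega$, with no leftover term that grows faster in $u$ than the good term can control — getting the signs and the power of $u$ to line up is the delicate point, and is presumably why the author says the quantity is "modified from that of Chen--Cheng."
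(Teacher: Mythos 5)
There is a genuine gap, in fact two, and you partly flag the second one yourself. First, your test quantity $\log|\partial\varphi|^2_\omega-\gamma(\varphi)$ contains no $F$, but the Bochner computation for $|\partial\varphi|^2_\omega$ inevitably produces a term linear in $\partial F$: differentiating the Monge--Amp\`ere relation (the first equation of \eqref{ccsc_eqn}) gives $\tilde{g}^{i\bar{j}}\nabla_k\nabla_i\nabla_{\bar{j}}\varphi=F_k$, so the third-order terms reduce to $2\mathrm{Re}(g^{k\bar{\ell}}F_k\varphi_{\bar{\ell}})$, not to $\tilde{\Delta}F$ or $\nabla(\tr_{\tilde\omega}\Ric(\omega))$ as you suggest; the second equation of \eqref{ccsc_eqn} controls $\tilde{\Delta}F$ only and is of no help here, and there is no a priori bound on $|\partial F|$ at this stage (it is obtained only in Section 6, \emph{using} this lemma). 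This is precisely why the paper (following Chen--Cheng) takes $Q=e^{A}(|\partial\varphi|^2_\omega+1)$ with $A=-(F+\lambda\varphi)+\tfrac12\varphi^2$: the $-F$ in the weight lets one substitute $F_k=-A_k-\lambda\varphi_k+\varphi\varphi_k$, and the resulting $\partial A$ contributions are cancelled against the cross term $2\mathrm{Re}(\tilde{g}^{i\bar{j}}A_i(|\partial\varphi|^2_\omega)_{\bar{j}})$ by the completed square $\tilde{g}^{i\bar{j}}g^{k\bar{\ell}}(\varphi_{ki}+A_i\varphi_k)(\varphi_{\bar{\ell}\bar{j}}+A_{\bar{j}}\varphi_{\bar{\ell}})\ge 0$. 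Without $F$ in your quantity this term is simply uncontrolled.

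Second, as you concede in your third step, your scheme at the maximum only yields a bound on $\tr_{\tilde\omega}\omega(p)$, which does not bound $u(p)=|\partial\varphi|^2_\omega(p)$; the ``something tuned so that the bookkeeping closes'' is exactly the missing idea. In the paper it is the $\tfrac12\varphi^2$ term in the exponent: since $\tfrac12\tilde{\Delta}\varphi^2=\varphi\tilde{\Delta}\varphi+|\partial\varphi|^2_{\tilde\omega}$, the factor $\tilde{\Delta}A$ acquires a positive $|\partial\varphi|^2_{\tilde\omega}$, which multiplied by $(|\partial\varphi|^2_\omega+1)$ gives the good term $|\partial\varphi|^2_{\tilde\omega}|\partial\varphi|^2_\omega$; combined with $|\partial\varphi|^2_\omega\tr_{\tilde\omega}\omega$ through the elementary inequality $|\partial\varphi|^2_{\tilde\omega}|\partial\varphi|^2_\omega+|\partial\varphi|^2_\omega\tr_{\tilde\omega}\omega\ge\tfrac{1}{n-1}(|\partial\varphi|^2_\omega)^{1+1/n}e^{-F/n}$, and the $C^0$ bound on $F$, one gets a superlinear term in $|\partial\varphi|^2_\omega$ that dominates the linear bad terms $C(|\partial\varphi|^2_\omega+1)$ and forces the bound at the maximum. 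Your handling of the torsion terms is workable but also differs from the paper, which avoids Cauchy--Schwarz into the Hessian by writing $\varphi_{r\bar{j}}=\tilde{g}_{r\bar{j}}-g_{r\bar{j}}$ so that the torsion contribution becomes first order in $\partial\varphi$ and is absorbed by the $(\lambda-\varphi)\tr_{\tilde\omega}\omega(|\partial\varphi|^2_\omega+1)$ term; and note your sign convention is off, since with $Q=\log u+A\varphi$ one gets $\tilde{\Delta}(A\varphi)=A(n-\tr_{\tilde\omega}\omega)$, i.e.\ the \emph{wrong} sign of $\tr_{\tilde\omega}\omega$ --- you need $-\lambda\varphi$, as in the paper. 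The two structural omissions above are what prevent your argument from closing.
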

\begin{proof}
Consider the quantity $Q:=e^{-(F+\lambda\varphi)+\tfrac{1}{2}\varphi^2}(|\partial \varphi|^2_\omega +1)$ and let $A:=-(F+\lambda\varphi)+\tfrac{1}{2}\varphi^2$.
We will compute $\tilde{\Delta} Q$ for $\lambda>0$ to be determined. 

Firstly, we have
 \begin{align}\begin{split}\label{grad_phi}
e^{-A}\tilde{\Delta} Q &= (\tilde{\Delta} A+|\partial A|^2_{\tilde{\omega}})(|\partial \varphi|^2_\omega + 1) 
+\tilde{\Delta} (|\partial \varphi|^2_\omega) + 2\text{Re}\big(\tilde{g}^{i\bar{j}}A_i(|\partial\varphi|^2_\omega)_{\bar{j}}\big) .
\end{split} \end{align}

Firstly, we have that
 \begin{align*}\begin{split}
\tilde{\Delta}A &= -\tilde{\Delta} F-\lambda\tilde{\Delta} \varphi + \tfrac{1}{2}\tilde{\Delta} \varphi^2\\
& = \underline{\R}-\tilde{g}^{i\bar{j}} R_{i\bar{j}} -(\lambda-\varphi)n + (\lambda-\varphi) \tilde{g}^{i\bar{j}}g_{i\bar{j}} + |\partial \varphi|^2_{\tilde{\omega}} .
\end{split} \end{align*}

Let $\nabla$ be the covariant derivative with respect to $g$. The second term in \eqref{grad_phi} can be computed as
\begin{align*}
\tilde{\Delta}(|\partial\varphi|^2_\omega)&=\tilde{g}^{i\bar{j}}\nabla_i \nabla_{\bar{j}} (g^{k\bar{\ell}}\varphi_k \varphi_{\bar{\ell}})\\
&=\tilde{g}^{i\bar{j}}g^{k\bar{\ell}}(\nabla_k\nabla_i\nabla_{\bar{j}}\varphi \varphi_{\bar{\ell}} +T_{ki}^r \nabla_r\nabla_{\bar{j}}\varphi \varphi_{\bar{\ell}}+ \nabla_i\varphi_k\nabla_{\bar{j}}\varphi_{\bar{\ell}}\\
& \ \ \ \ +\nabla_{\bar{j}}\varphi_k\nabla_i\varphi_{\bar{\ell}} + \varphi_k \nabla_i\nabla_{\bar{\ell}}\nabla_{\bar{j}}\varphi   + \varphi_k\partial_i(\overline{T_{lj}^r})\varphi_{\bar{r}}+\varphi_k\overline{T^r_{\ell j}}\varphi_{i\bar{r}})\\
&=g^{k\bar{\ell}}F_k\varphi_{\bar{\ell}} +2\text{Re}(\tilde{g}^{i\bar{j}}g^{k\bar{\ell}}T^r_{ki}\varphi_{r\bar{j}}\varphi_{\bar{\ell}}) + \tilde{g}^{i\bar{j}}g^{k\bar{\ell}}\varphi_{ki}\varphi_{\bar{\ell}\bar{j}} + \tilde{g}^{i\bar{j}}g^{k\bar{\ell}}\varphi_{k\bar{j}}\varphi_{\bar{\ell}i}\\
& \ \ \ \ +g^{k\bar{\ell}}\tilde{g}^{i\bar{j}}\varphi_k\nabla_{\bar{\ell}}\nabla_i\nabla_{\bar{j}}\varphi + g^{k\bar{\ell}}\tilde{g}^{i\bar{j}}\varphi_k R_{i\bar{\ell}\;\;\bar{j}}^{\;\;\;\bar{r}}\varphi_{\bar{r}}
+g^{k\bar{\ell}}\tilde{g}^{i\bar{j}}\varphi_k\partial_i(\overline{T^r_{\ell j}})\varphi_{\bar{r}}\\
&=2\text{Re}(g^{k\bar{\ell}}F_k\varphi_{\bar{\ell}}) +2\text{Re}(\tilde{g}^{i\bar{j}}g^{k\bar{\ell}}T^r_{ki}\varphi_{r\bar{j}}\varphi_{\bar{\ell}}) + \tilde{g}^{i\bar{j}}g^{k\bar{\ell}}\varphi_{ki}\varphi_{\bar{\ell}\bar{j}} + \tilde{g}^{i\bar{j}}g^{k\bar{\ell}}\varphi_{k\bar{j}}\varphi_{\bar{\ell}i}\\
& \ \ \ \ + g^{k\bar{\ell}}\tilde{g}^{i\bar{j}}\varphi_k R_{i\bar{\ell}\;\;\bar{j}}^{\;\;\;\bar{r}}\varphi_{\bar{r}}  +g^{k\bar{\ell}}\tilde{g}^{i\bar{j}}\varphi_k\partial_i(\overline{T^r_{\ell j}})\varphi_{\bar{r}},
\end{align*}
where we used the commutation formula from \eqref{a_comm_formula} and the fact that
\begin{align*}
\tilde{g}^{i\bar{j}}\nabla_k \nabla_i\nabla_{\bar{j}}\varphi = F_k.
\end{align*}
From there we commute the indices of the curvature tensor as in \eqref{curvature_comm_formula} and use the fact that 
\begin{align*}
F_k = -A_k -\lambda\varphi_k + \varphi\varphi_k
\end{align*}
to obtain
\begin{align*}
\tilde{\Delta}(|\partial\varphi|^2_\omega)&=-2\text{Re}(g^{k\bar{\ell}}A_k\varphi_{\bar{\ell}}) +2\text{Re}(\tilde{g}^{i\bar{j}}g^{k\bar{\ell}}T^r_{ki}\varphi_{r\bar{j}}\varphi_{\bar{\ell}}) + \tilde{g}^{i\bar{j}}g^{k\bar{\ell}}\varphi_{ki}\varphi_{\bar{\ell}\bar{j}} + \tilde{g}^{i\bar{j}}g^{k\bar{\ell}}\varphi_{k\bar{j}}\varphi_{\bar{\ell}i}\\
& \ \ \ \ + g^{k\bar{\ell}}\tilde{g}^{i\bar{j}}\varphi_kR_{i\bar{j}\;\;\bar{\ell}}^{\;\;\;\bar{r}}\varphi_{\bar{r}} -2(\lambda-\varphi)|\partial\varphi|^2_\omega .
\end{align*}
Substituting back into \eqref{grad_phi}, we arrive at the following equality:
\begin{align*}
e^{-A}\tilde{\Delta} Q &=(|\partial A|^2_{\tilde{\omega}}+ \big(\underline{\R}-(\lambda-\varphi)n  +\tilde{g}^{i\bar{j}}((\lambda -\varphi)g_{i\bar{j}}-R_{i\bar{j}})+|\partial \varphi|^2_{\tilde{\omega}}\big)(|\partial\varphi|^2_\omega+1)\\
& \ \ \ \  -2\text{Re}(g^{k\bar{\ell}}A_k\varphi_{\bar{\ell}}) +2\text{Re}(\tilde{g}^{i\bar{j}}g^{k\bar{\ell}}T^r_{ki}\varphi_{r\bar{j}}\varphi_{\bar{\ell}}) + \tilde{g}^{i\bar{j}}g^{k\bar{\ell}}\varphi_{ki}\varphi_{\bar{\ell}\bar{j}} + \tilde{g}^{i\bar{j}}g^{k\bar{\ell}}\varphi_{k\bar{j}}\varphi_{\bar{\ell}i} \\
& \ \ \ \  + g^{k\bar{\ell}}\tilde{g}^{i\bar{j}}\varphi_kR_{i\bar{j}\;\;\bar{\ell}}^{\;\;\;\bar{r}} \varphi_{\bar{r}}  -2(\lambda-\varphi)|\partial \varphi|^2_{\tilde{\omega}}+2\text{Re}(\tilde{g}^{i\bar{j}}A_ig^{k\bar{\ell}}(\varphi_k\varphi_{\bar{\ell}\bar{j}}+\varphi_{k\bar{j}}\varphi_{\bar{\ell}})).
\end{align*}
Now, we use the completed square
\begin{align*}
0&\le \tilde{g}^{i\bar{j}}g^{k\bar{\ell}}(\varphi_{ki}+A_i\varphi_k)(\varphi_{\bar{\ell}\bar{j}}+A_{\bar{j}}\varphi_{\bar{\ell}})\\
& = \tilde{g}^{i\bar{j}}g^{k\bar{\ell}}\varphi_{ki}\varphi_{\bar{\ell}\bar{j}} +|\partial A|^2_{\tilde{\omega}}|\partial\varphi|^2_\omega+2\text{Re}(\tilde{g}^{i\bar{j}}g^{k\bar{\ell}}\varphi_{\bar{\ell}\bar{j}}A_i\varphi_{k}),
\end{align*}
the simplification
\begin{align*}
g^{k\bar{\ell}}A_k \varphi_{\bar{\ell}}- g^{k\bar{\ell}}\tilde{g}^{i\bar{j}}A_i\varphi_{k\bar{j}}\varphi_{\bar{\ell}} = g^{k\bar{\ell}}\varphi_{\bar{\ell}}(A_k - \tilde{g}^{i\bar{j}}A_i(\tilde{g}_{k\bar{j}}-g_{k\bar{j}})) = \tilde{g}^{i\bar{j}}A_i\varphi_{\bar{j}}
\end{align*}
and rewrite the torsion term as
\begin{align*}
2\text{Re}(\tilde{g}^{i\bar{j}}g^{k\bar{\ell}}T^r_{ki}\varphi_{r\bar{j}}\varphi_{\bar{\ell}}) &=2\text{Re}(\tilde{g}^{i\bar{j}}g^{k\bar{\ell}}T^r_{ki}(\tilde{g}_{r\bar{j}}-g_{r\bar{j}})\varphi_{\bar{\ell}})\\
& =2\text{Re}(g^{k\bar{\ell}}T^i_{ki}\varphi_{\bar{\ell}})-2\text{Re}(\tilde{g}^{i\bar{j}}g^{k\bar{\ell}}g_{r\bar{j}}T^r_{ki}\varphi_{\bar{\ell}}),
\end{align*}
to obtain
\begin{align*}
e^{-A}\tilde{\Delta} Q&\ge |\partial A|^2_{\tilde{\omega}}+ (\underline{\R}-(\lambda-\varphi) n +\tilde{g}^{i\bar{j}}((\lambda -\varphi)g_{i\bar{j}}-R_{i\bar{j}})+|\partial \varphi|^2_{\tilde{\omega}})(|\partial\varphi|^2_\omega+1)\\
& \ \ \ \  +2\text{Re}(g^{k\bar{\ell}}T^i_{ki}\varphi_{\bar{\ell}})-2\text{Re}(\tilde{g}^{i\bar{j}}g^{k\bar{\ell}}g_{r\bar{j}}T^r_{ki}\varphi_{\bar{\ell}}) + \tilde{g}^{i\bar{j}}g^{k\bar{\ell}}\varphi_{k\bar{j}}\varphi_{\bar{\ell}i} + g^{k\bar{\ell}}\tilde{g}^{i\bar{j}}\varphi_kR_{i\bar{j}\;\;\bar{\ell}}^{\;\;\;\bar{r}} \varphi_{\bar{r}}\\
& \ \ \ \  -2(\lambda-\varphi)|\partial\varphi|^2_\omega  - 2\text{Re}(\tilde{g}^{i\bar{j}}A_i\varphi_{\bar{j}}).
\end{align*}

Applying a few instances of Young's inequality and choosing $\lambda$ sufficiently large, we have
\begin{align}
e^{-A}\tilde{\Delta} Q&\ge -C(|\partial\varphi|^2_\omega+1) +C\tr_{\tilde{\omega}}\omega(|\partial\varphi|^2_\omega+1) +|\partial \varphi|^2_{\tilde{\omega}}|\partial\varphi|^2_\omega  . \label{q_bound}
\end{align}
Noting an elementary consequence of the fact that $e^F =\tfrac{\tilde{\omega}^n}{\omega^n}$ (see page 12 in \cite{cc17}), we have the inequality
\begin{align*}
|\partial \varphi|^2_{\tilde{\omega}}|\partial\varphi|^2_\omega+|\partial\varphi|^2_\omega\tr_{\tilde{\omega}}\omega \ge \tfrac{1}{n-1}(|\partial\varphi|_{\omega}^2)^{1+\tfrac{1}{n}}e^{-\tfrac{F}{n}}.
\end{align*}
Applying this inequality to the last term in \eqref{q_bound}, we see that at a maximum of $Q$, we have the bound
\begin{align*}
0\ge (|\partial\varphi|^2_\omega)^{1+\tfrac{1}{n}}e^{-\tfrac{F}{n}}-C(|\partial\varphi|^2_\omega+1)  .
\end{align*}
Since we have bounds on $F$ depending on $(X,\omega)$ and the entropy, we arrive at the desired upper bound on $|\partial\varphi|^2_\omega$.
\end{proof}

\section{$L^p$ bound on the trace}
We are now ready to compute $L^p$ bounds on $\tr_\omega \tilde{\omega}$. Our approach reflects that of Chen-Cheng \cite{cc17} using a modification of the quantity to which we apply the maximum principle to account for new torsion terms. The result of this section will be crucial for obtaining the $L^\infty$ bound on the trace in the next section. We prove the following:
\begin{thm}
Let $(\varphi, F)$ be a smooth solution to \eqref{ccsc_eqn}. For any $p>0$, there exists a constant $C(p)$ depending only on $p$, $(X,\omega)$ and $\emph{Ent}(\tilde{\omega},\omega)$ such that 
 \begin{align*}
\int_X (\tr_\omega \tilde{\omega})^p \omega^n\le C(p).
\end{align*}
\end{thm}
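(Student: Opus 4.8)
The plan is to establish, for every $\beta>0$, a weighted integral bound of the form $\int_X e^{\beta Q}\,\tr_{\tilde\omega}\omega\,\tilde\omega^n \le C(\beta)\bigl(1+\int_X e^{\beta Q}\,\tilde\omega^n\bigr)$ for a suitable quantity $Q$ comparable to $\log\tr_\omega\tilde\omega$, and then to upgrade this to the desired $L^p$ bounds by an iteration in $\beta$ followed by H\"older's inequality. The key structural fact is that, since $\omega$ satisfies $\partial\bar\partial\omega^k=0$ for $k=1,2$ — hence, as recorded in Section 2, for all $k=1,\dots,n-1$ — so does $\tilde\omega=\omega+\sqrt{-1}\partial\bar\partial\varphi$ (because $\partial\bar\partial\tilde\omega=\partial\bar\partial\omega=0$ and $\partial\bar\partial\tilde\omega^2=\partial\bar\partial\omega^2+2(\partial\bar\partial\omega)\wedge\sqrt{-1}\partial\bar\partial\varphi=0$); consequently $\int_X\tilde\Delta f\,\tilde\omega^n=n\int_X f\,\sqrt{-1}\partial\bar\partial\tilde\omega^{n-1}=0$ for every $f\in C^\infty(X)$, exactly as for $\omega$.

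First I would derive a pointwise differential inequality. Combining the Chern-Lu (Aubin-Yau) inequality in the Hermitian setting — whose leading curvature term involves only the fixed metric $\omega$, so that it reads $\tilde\Delta\log\tr_\omega\tilde\omega\ge -C\tr_{\tilde\omega}\omega$ plus a nonnegative gradient term and torsion error terms carrying derivatives of $\tilde\omega$ — with the identity $\tilde\Delta\varphi=n-\tr_{\tilde\omega}\omega$ and with the equation $\tilde\Delta F=-\underline{\R}+\tr_{\tilde\omega}\Ric(\omega)$ from \eqref{ccsc_eqn}, and using the $C^0$ bounds on $F,\varphi$ from Lemma \ref{entropy_bound_suff} together with the gradient bound $|\partial\varphi|^2_\omega\le C$ from Lemma \ref{gradient_varphi_bound} (which in particular gives $|\partial\varphi|^2_{\tilde\omega}\le C\tr_{\tilde\omega}\omega$), I would reorganize the torsion contributions — here invoking $\tilde T_{jk\bar\ell}=T_{jk\bar\ell}$ and completing squares — into the nonnegative gradient term and into terms controlled by $\tr_{\tilde\omega}\omega$. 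Adding $-\lambda\varphi$ for $\lambda$ large (which contributes $\lambda\tr_{\tilde\omega}\omega-\lambda n$) together with a further bounded modification of Chen-Cheng's quantity, tailored to absorb the residual torsion in the manner of Lemma \ref{gradient_varphi_bound}, this produces
\[
\tilde\Delta Q\ \ge\ c\,\tr_{\tilde\omega}\omega-C-(\text{torsion error}),\qquad Q:=\log\tr_\omega\tilde\omega-\lambda\varphi+(\text{bounded modification}),
\]
where $Q$ is bounded below (since $\tr_\omega\tilde\omega\ge n e^{F/n}\ge c>0$), so that $c\,(\tr_\omega\tilde\omega)^\beta\le e^{\beta Q}\le C\,(\tr_\omega\tilde\omega)^\beta$, and the torsion error, though not bounded pointwise, will be controlled after integration.

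Next I would integrate. Multiplying the inequality by $e^{\beta Q}$ and integrating against $\tilde\omega^n$, and using $e^{\beta Q}\tilde\Delta Q=\beta^{-1}\tilde\Delta(e^{\beta Q})-\beta e^{\beta Q}|\partial Q|^2_{\tilde\omega}$ together with $\int_X\tilde\Delta(\,\cdot\,)\tilde\omega^n=0$, the left side becomes $-\beta\int_X e^{\beta Q}|\partial Q|^2_{\tilde\omega}\tilde\omega^n\le 0$, so that
\[
c\int_X e^{\beta Q}\tr_{\tilde\omega}\omega\,\tilde\omega^n\ \le\ C\int_X e^{\beta Q}\tilde\omega^n-\beta\int_X e^{\beta Q}|\partial Q|^2_{\tilde\omega}\tilde\omega^n+\int_X e^{\beta Q}(\text{torsion error})\,\tilde\omega^n.
\]
The torsion error is estimated by integration by parts, using Cherrier's inequality for the complex Monge-Amp\`ere equation $\tilde\omega^n=e^F\omega^n$ (cf. (9.5) of \cite{tw15}), the bound $|\partial\varphi|^2_{\tilde\omega}\le C\tr_{\tilde\omega}\omega$, and a weighted $L^2(\tilde\omega)$ estimate for $\partial F$ obtained by pairing the second equation of \eqref{ccsc_eqn} with $F\,e^{\beta Q}$; at this step an additional term appears because the volume form is not closed (the derivative landing on it), which is controlled using $\|F\|_0\le C$ and the already-available $L^1$ bound on $\tr_{\tilde\omega}\omega$. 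After choosing the small parameters in the Young inequalities appropriately, the torsion error is absorbed into the good term and the negative gradient term, leaving $\int_X e^{\beta Q}\tr_{\tilde\omega}\omega\,\tilde\omega^n\le C(\beta)\bigl(1+\int_X e^{\beta Q}\tilde\omega^n\bigr)$. Since $\tr_{\tilde\omega}\omega\ge c\,(\tr_\omega\tilde\omega)^{1/(n-1)}$ (an elementary eigenvalue inequality using $\|F\|_0\le C$), $e^{\beta Q}\ge c\,(\tr_\omega\tilde\omega)^\beta$, and $\tilde\omega^n\le C\omega^n$, this yields the iteration estimate $\int_X(\tr_\omega\tilde\omega)^{\beta+\frac1{n-1}}\omega^n\le C(\beta)\bigl(1+\int_X(\tr_\omega\tilde\omega)^{\beta}\omega^n\bigr)$. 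Starting from $\beta=0$, where the right side is a constant multiple of the volume, and iterating $\beta\mapsto\beta+\frac1{n-1}$ finitely many times bounds $\int_X(\tr_\omega\tilde\omega)^{q}\omega^n$ for arbitrarily large $q$, and H\"older's inequality then gives the bound for every $p>0$.

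The main obstacle is the control of the torsion error. Because $\tilde\omega$ is not K\"ahler, the Chern-Lu computation produces terms mixing the (bounded) torsion of $\omega$ with derivatives of $\tilde\omega$ and with $\partial F$, whose naive pointwise size grows faster than linearly in $\tr_{\tilde\omega}\omega$ and so cannot simply be absorbed into the good term $c\int e^{\beta Q}\tr_{\tilde\omega}\omega\,\tilde\omega^n$. The resolution is, at the pointwise level, to choose the modification of Chen-Cheng's quantity so that the worst torsion contributions fall into the nonnegative gradient term or into terms at most linear in $\tr_{\tilde\omega}\omega$; and, at the integral level, to handle the remainder — in particular the term forced by the non-closedness of the volume form — only after integration by parts, via Cherrier's inequality and the weighted estimate on $\partial F$. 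Getting the modified quantity exactly right, so that every torsion term is accounted for, is the delicate part of the argument.
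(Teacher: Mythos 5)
Your overall skeleton matches the paper's: Cherrier's inequality for $\tilde{\Delta}\tr_\omega\tilde{\omega}$, a weight built from powers of $\tr_\omega\tilde{\omega}$ and $e^{-\lambda\varphi}$, integration against $\tilde{\omega}^n$ using $\int_X\tilde{\Delta}(\cdot)\,\tilde{\omega}^n=0$, an integration by parts that produces an extra term because $\partial\omega^{n-1}\neq 0$, and an iteration gaining $\tfrac{1}{n-1}$ in the exponent starting from the volume bound. The genuine gap is in how you treat the term $\Delta F$ (Chern Laplacian with respect to the \emph{fixed} metric $\omega$) that Cherrier's inequality forces into the pointwise estimate. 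Your "Chern--Lu'' statement, with curvature of the fixed metric only and no $\Delta F$, applies to $\tr_{\tilde{\omega}}\omega$; for $\tr_\omega\tilde{\omega}$ the relevant inequality reads $\tilde{\Delta}\log\tr_\omega\tilde{\omega}\ge \tfrac{1}{\tr_\omega\tilde{\omega}}\big(\Delta F - C\tr_\omega\tilde{\omega}\,\tr_{\tilde{\omega}}\omega+\cdots\big)$, and $\Delta F$ is a fourth-order quantity in $\varphi$ that the equation does not control (it only gives $\tilde{\Delta}F$). The paper handles it by building the factor $e^{-\alpha F}$, with $\alpha\ge 2(p+2)$ \emph{growing with the trace power}, into the quantity $Q=e^{-\alpha(F+\lambda\varphi)}(\tr_\omega\tilde{\omega}+1)$: when $\int_X e^{A+F}Q^{2p}\Delta F\,\omega^n$ is integrated by parts, differentiating $e^{(1-\alpha)F}$ produces the positive term $(\tfrac{\alpha-1}{2}-p-\tfrac12)\int_X Q^{2p}e^{A+F}|\partial F|^2_\omega\,\omega^n$, which is exactly what absorbs the $p$-dependent cross term $2pQ^{2p-1}\partial Q\wedge\bar{\partial}F$ and the terms created by the non-closed volume form.

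Your quantity $Q=\log\tr_\omega\tilde{\omega}-\lambda\varphi+(\text{bounded modification})$ has no $F$-weight whose exponent scales with $\beta$, and the substitute you propose --- a weighted $L^2(\tilde{\omega})$ bound on $\partial F$ obtained by pairing $\tilde{\Delta}F=-\underline{\R}+\tr_{\tilde{\omega}}\Ric(\omega)$ with $Fe^{\beta Q}$ --- does not close. After integrating by parts the $\Delta F$ contribution, the cross term coming from $\beta\,\partial Q\wedge\bar{\partial}F$ leaves you needing to control roughly $C\beta\int_X e^{\beta Q}|\partial F|^2_{\tilde{\omega}}\tilde{\omega}^n$; the pairing estimate returns this quantity only at the cost of $C\beta\int_X e^{\beta Q}\tr_{\tilde{\omega}}\omega\,\tilde{\omega}^n$ plus $C\beta^{3}\int_X e^{\beta Q}|\partial Q|^2_{\tilde{\omega}}\tilde{\omega}^n$ (the latter because differentiating $e^{\beta Q}$ inside the pairing again produces a factor $\beta$). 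Your good terms have coefficients $c$ (fixed, from $\lambda\tilde{\Delta}(-\varphi)$) and $\beta$ respectively, so for large $\beta$ neither bad term can be absorbed, and the iteration stalls after finitely many steps instead of reaching every $p$. The repair is essentially to reinstate the paper's device: include $-\alpha F$ in the exponent with $\alpha$ comparable to $\beta$ (equivalently $p$), so that the integration by parts of $\Delta F$ itself generates a positive $|\partial F|^2_\omega$ term of size $\sim\alpha$; once you do that, your scheme becomes a reparametrization of the paper's proof rather than an alternative to it.
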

\begin{proof}
Define $Q:= e^{-\alpha(F+\lambda\varphi)} (\tr_\omega \tilde{\omega}+1)$ and let $A:= -\alpha(F+\lambda\varphi)$ where $\alpha, \lambda>0$ are constants to be determined. We first compute
\begin{align*}
e^{-A}\tilde{\Delta} (e^A(\tr_\omega \tilde{\omega}+1))&=(\tilde{\Delta}A+|\partial A|^2_{\tilde{\omega}})(\tr_\omega \tilde{\omega} +1) + \tilde{\Delta} \tr_\omega \tilde{\omega} + 2\text{Re}(\tilde{g}^{i\bar{j}}A_i\partial_{\bar{j}}\tr_\omega \tilde{\omega}) .
\end{align*}
Using an inequality due to Cherrier \cite{cherrier} (see also (9.5) of \cite{tw15}) and the fact that $g$ is a fixed metric whose torsion terms and their derivatives are bounded by uniform constants, we have the following:
\begin{align*}\begin{split}
\tilde{\Delta} \log \tr_\omega \tilde{\omega} \ge \tfrac{1}{\tr_\omega \tilde{\omega}}\Big(2Re (\tilde{g}^{k\bar{q}}T^i_{ik}\tfrac{\partial_{\bar{q}}\tr_\omega \tilde{\omega}}{\tr_\omega \tilde{\omega}}) +\Delta F - C\tr_\omega \tilde{\omega} \tr_{\tilde{\omega}} \omega  \Big)\\
\Rightarrow \tilde{\Delta} \tr_\omega \tilde{\omega} \ge 2\text{Re} (\tilde{g}^{k\bar{q}}T^i_{ik}\tfrac{\partial_{\bar{q}}\tr_\omega \tilde{\omega}}{\tr_\omega \tilde{\omega}}) +\Delta F - C\tr_\omega \tilde{\omega} \tr_{\tilde{\omega}} \omega +\tfrac{|\partial\tr_\omega \tilde{\omega}|^2_{\tilde{\omega}}}{\tr_\omega \tilde{\omega}}
\end{split} \end{align*} 
where we used the fact that we have uniform lower bounds on $\tr_\omega \tilde{\omega}$ and $\tr_{\tilde{\omega}} \omega$ by the geometric-arithmetic mean inequality.
We will use the following completed square:
 \begin{align*}\begin{split}
0& \le \tfrac{1}{\tr_\omega \tilde{\omega}} \tilde{g}^{i\bar{j}}(A_i \tr_\omega\tilde{\omega}+ T^k_{ki}+\partial_i\tr_\omega\tilde{\omega})(A_{\bar{j}}\tr_\omega\tilde{\omega}+\overline{T^\ell_{\ell j}}+\partial_{\bar{j}}\tr_\omega\tilde{\omega})\\
& = |\partial A|^2_{\tilde{\omega}} \tr_\omega \tilde{\omega}+ \tfrac{\tilde{g}^{i\bar{j}}T^k_{ki}\overline{T^\ell_{\ell j}}}{\tr_\omega \tilde{\omega}}  + \tfrac{|\partial\tr_\omega \tilde{\omega}|^2_{\tilde{\omega}}}{\tr_\omega \tilde{\omega}} + 2 \text{Re}(\tilde{g}^{i\bar{j}}A_i \overline{T^\ell_{\ell j}})\\
& \ \ \ \  +2\text{Re}(\tilde{g}^{i\bar{j}}A_i\partial_{\bar{j}}\tr_\omega \tilde{\omega}) + \tfrac{2}{\tr_\omega \tilde{\omega}} \text{Re}(\tilde{g}^{i\bar{j}}T^k_{ki} \partial_{\bar{j}}\tr_\omega \tilde{\omega}) .
\end{split} \end{align*}
Putting this together, we have
\begin{align}\begin{split}\label{laplacian_sec3}
e^{-A}\tilde{\Delta} Q &\ge \tilde{\Delta}A(\tr_\omega \tilde{\omega}+1)+ |\partial A|^2_{\tilde{\omega}}(\tr_\omega \tilde{\omega}+1) + \tfrac{2}{\tr_\omega \tilde{\omega}}\text{Re}(\tilde{g}^{i\bar{j}}T^k_{ki}\partial_{\bar{j}}\tr_\omega\tilde{\omega})\\
& \ \ \ \  +\Delta F\ - C\tr_\omega \tilde{\omega} \tr_{\tilde{\omega}} \omega + \tfrac{|\partial \tr_\omega \tilde{\omega}|_{\tilde{\omega}}^2}{\tr_\omega \tilde{\omega}} + 2\text{Re}(\tilde{g}^{i\bar{j}}A_i\partial_{\bar{j}}\tr_\omega \tilde{\omega})\\
& \ge \alpha(\underline{\R} - \tr_{\tilde{\omega}}\Ric(\omega) - \lambda \tilde{\Delta} \varphi)(\tr_\omega \tilde{\omega}+1)+|\partial A|^2_{\tilde{\omega}} + \Delta F  \\
&\ \ \ \  - C\tr_\omega \tilde{\omega} \tr_{\tilde{\omega}} \omega  -\tfrac{\tilde{g}^{i\bar{j}}T^k_{ki}\overline{T^\ell_{\ell j}}}{\tr_\omega \tilde{\omega}}-2\text{Re}(\tilde{g}^{i\bar{j}}A_i\overline{T^{\ell}_{\ell\bar{j}}})\\
& \ge \alpha(\underline{\R} - \lambda n +(\tfrac{\lambda}{2}-\tfrac{C}{\alpha})\tr_{\tilde{\omega}} \omega)(\tr_\omega \tilde{\omega} +1)  +\Delta F   \\
\end{split} \end{align}
where we used in the last line the following instance of Young's inequality:
\begin{align*}
2 \text{Re}(\tilde{g}^{i\bar{j}}A_i\overline{T^{\ell}_{\ell\bar{j}}}) \ge -\tilde{g}^{i\bar{j}}T^k_{ki}\overline{T^{\ell}_{\ell j}}- |\partial A|^2_{\tilde{\omega}} \ge -C\tr_{\tilde{\omega}} \omega - |\partial A|^2_{\tilde{\omega}}
\end{align*}
and chose $\lambda$ sufficiently large compared to $\Ric(\omega)$.

Using the fact that 
 \begin{align*}\begin{split}
\tfrac{1}{2p+1}\tilde{\Delta} (Q^{2p+1}) =2pQ^{2p-1}|\partial Q|^2_{\tilde{\omega}} +Q^{2p}\tilde{\Delta} Q \ge 2pQ^{2p-2}|\partial Q|^2_\omega e^A +Q^{2p}\tilde{\Delta} Q
\end{split} \end{align*}
and integrating with respect to $\tilde{\omega}^n= e^F \omega^n$, we have at
 \begin{align*}\begin{split}
& \int_X  2pe^{A+F}Q^{2p-2}|\partial Q|^2_\omega \omega^n+ \int_X e^{A+F}(\tfrac{\lambda\alpha}{2} - C)\tr_{\tilde{\omega}}\omega(\tr_\omega \tilde{\omega} +1) Q^{2p}\omega^n \\
&\ \ \ \ \ \  + \int_X e^{A+F}Q^{2p}\Delta F \omega^n \le \int_X\alpha e^{A+F}(\lambda n-\underline{\R})(\tr_\omega \tilde{\omega}+1) Q^{2p}\omega^n .
\end{split} \end{align*}
Integrating by parts the integral involving $\Delta F$, where we note that an additional term arises from the derivative landing on the volume form, and using Young's inequality, we see that
 \begin{align*}\begin{split}
\int_X &e^{A+F} Q^{2p}\Delta F \omega^n = \int_X e^{(1-\alpha)F-\alpha\lambda\varphi}Q^{2p}\sqrt{-1}\partial\bar{\partial} F \wedge \omega^{n-1} \\
&\ge \int_X e^{(1-\alpha)F-\alpha\lambda\varphi}Q^{2p}\sqrt{-1} ((\alpha-1)\partial F +\alpha\lambda \partial\varphi)\wedge\bar{\partial} F  \wedge\omega^{n-1}\\
& \ \ \ \  - \int_X e^{(1-\alpha)F-\alpha\lambda\varphi}2pQ^{2p-1}\sqrt{-1}\partial Q \wedge \bar{\partial}F\wedge \omega^{n-1}  -C\int_X e^{A+F}Q^{2p}|\partial F|_{\omega} \omega^n\\
&\ge \int_X (\tfrac{\alpha-1}{2}-p-\tfrac{1}{2})Q^{2p}e^{A+F}|\partial F|^2_\omega \omega^n - \int_X \Big(\tfrac{C\alpha^2\lambda^2}{2(\alpha-1)}+C\Big)e^{A+F}Q^{2p}\omega^n\\
& \ \ \ \ \ \ \ \ \  - \int_X pe^{A+F}Q^{2p-2}|\partial Q|^2_\omega \omega^n
\end{split} \end{align*} 
where we used Lemma \ref{gradient_varphi_bound} to bound $|\partial\varphi|^2_\omega$ in the last inequality along with the fact that we have a lower bound on $Q$. 
Combining everything together and bounding $e^{A+F}$, we arrive at
 \begin{align*}\begin{split}
\int_X &pQ^{2p-2}|\partial Q|^2_\omega\omega^n + \int_X (\tfrac{\alpha}{2}- p -1)Q^{2p}|\partial F|^2_\omega \omega^n\\
& \ \  + \int_X (\tfrac{\lambda\alpha}{2} - C)\tr_\omega \tilde{\omega}(\tr_{\tilde{\omega}}\omega+1) Q^{2p}\omega^n\\
& \ \ \ \ \ \  \le C\int_X\alpha (\lambda n-\underline{\R})(\tr_\omega \tilde{\omega}+1) Q^{2p}\omega^n + C\int_X (1+\tfrac{\alpha^2\lambda^2}{2(\alpha-1)})Q^{2p}\omega^n .
\end{split} \end{align*}
Choosing $\lambda\ge 2C+2$ and requiring $\alpha\ge 2(p+2)$ and $p\ge 0$, we have that
 \begin{align*}\begin{split}
\int_X (\tr_\omega \tilde{\omega})^{2p+1+\tfrac{1}{n-1}}\omega^n &\le C\int_X\tr_\omega \tilde{\omega}(\tr_{\tilde{\omega}} \omega +1) Q^{2p}\omega^n\\
&  \le C\int_X (\tr_\omega \tilde{\omega}+1) Q^{2p}\omega^n + C\int_XQ^{2p}\omega^n\\
&\le C\int_X \tr_\omega \tilde{\omega} Q^{2p}\omega^n\le C\int_X (\tr_\omega \tilde{\omega})^{2p+1}\omega^n
\end{split} \end{align*}
where the third inequality holds since we have a lower bound for $\tr_\omega \tilde{\omega}$ and $C$ depends on $p, (X,\omega), \text{Ent}(\tilde{\omega},\omega)$. For the case $p=0$, we see that
 \begin{align*}\begin{split}
\int_X (\tr_\omega \tilde{\omega})^{1+\tfrac{1}{n-1}} \omega^n&\le C\int_X  \tr_\omega \tilde{\omega} \ \omega^n \le  C vol(X) .
\end{split} \end{align*}
Thus, by iterating, we can bound the $L^p$ norm of $\tr_\omega \tilde{\omega}$ by a constant depending on $p$, $(X,\omega)$ and $\text{Ent}(\tilde{\omega},\omega)$.
\end{proof}
\section{$L^\infty$ bound on the trace}
In this section we will obtain a uniform $L^\infty$ bound on $\tr_\omega \tilde{\omega}$. We will accomplish this by computing the $L^\infty$ norm of the sum of $\tr_\omega \tilde{\omega}$ and $|\partial F|^2_{\tilde{\omega}}$  as this will help cancel out some bad terms, following the strategy of Chen-Cheng \cite{cc17}. The key ingredient is a calculation using covariant derivatives with respect to $\tilde{\omega}$ for a specific quantity to which we apply the maximum principle. The quantity is chosen in such a way as to preserve a positive amount of certain desirable terms which will serve to control the bad terms that arise from torsion and derivatives of torsion.

In particular, we prove 
\begin{thm}
Let $(\varphi, F)$ be a smooth solution to \eqref{ccsc_eqn}. Then there exists a constant $C$ depending only on $(X,\omega)$ and $\emph{Ent}(\tilde{\omega},\omega)$, such that
\begin{align*}
\max_X (\tr_\omega \tilde{\omega}) + \max_X |\partial F|^2_{\tilde{\omega}} \le C .
\end{align*}
\end{thm}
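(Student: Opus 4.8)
The goal is the combined bound $\max_X\bigl(\tr_\omega\tilde\omega + |\partial F|^2_{\tilde\omega}\bigr)\le C$, which yields the asserted statement since both summands are nonnegative. The plan is to run a maximum principle computation with respect to $\tilde\omega$ that produces a differential inequality amenable to the Moser iteration of Section 4 of \cite{cc17}, with the $L^p$ estimate of Section 5 as the base case. The test quantity will be
$$Q := e^{-\alpha(F+\lambda\varphi)}\bigl(\tr_\omega\tilde\omega + a\,|\partial F|^2_{\tilde\omega} + 1\bigr),$$
with $\alpha,\lambda,a>0$ constants fixed at the end. Including the term $a\,|\partial F|^2_{\tilde\omega}$ is the crucial point: its Chern--Bochner formula with respect to $\tilde\omega$ contributes the positive second-order term $|\tilde\nabla\partial F|^2_{\tilde\omega}+|\tilde\nabla\bar\partial F|^2_{\tilde\omega}$, a definite fraction of which will absorb the genuinely third-order terms (in $\varphi$ and $F$) and the new torsion and derivative-of-torsion terms that obstruct a naive estimate for $\tr_\omega\tilde\omega$ alone; the coefficient $a$ must be chosen so that, after combining the two Laplacians, a positive multiple of this square remains.

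First I would record, via Cherrier's inequality exactly as in Section 5,
$$\tilde\Delta\tr_\omega\tilde\omega \ge 2\,\text{Re}\Bigl(\tilde g^{k\bar q}T^i_{ik}\tfrac{\partial_{\bar q}\tr_\omega\tilde\omega}{\tr_\omega\tilde\omega}\Bigr) + \Delta F - C\,\tr_\omega\tilde\omega\,\tr_{\tilde\omega}\omega + \tfrac{|\partial\tr_\omega\tilde\omega|^2_{\tilde\omega}}{\tr_\omega\tilde\omega}.$$
Next, working with the Chern connection $\tilde\nabla$ of $\tilde\omega$ (for which $\tilde\nabla\tilde g\equiv0$) and the commutation formulas \eqref{a_comm_formula}--\eqref{f_comm_formula} applied to $\tilde\omega$, I would expand $\tilde\Delta|\partial F|^2_{\tilde\omega}=\tilde g^{k\bar l}\tilde g^{i\bar j}\tilde\nabla_k\tilde\nabla_{\bar l}(F_iF_{\bar j})$ into the positive Hessian terms $|\tilde\nabla\partial F|^2_{\tilde\omega}+|\tilde\nabla\bar\partial F|^2_{\tilde\omega}$, a Chern--Ricci term of $\tilde\omega$ applied to $\partial F$ (with $\tilde R_{i\bar j}=R_{i\bar j}-F_{i\bar j}$), a cross-term of the form $\text{Re}\bigl(\tilde g^{i\bar j}F_{\bar j}\,\partial_i(\tilde\Delta F)\bigr)$, and torsion terms in which $\tilde T$ is replaced by $T$ via \eqref{torsion}. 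Differentiating the second line of \eqref{ccsc_eqn} gives $\partial_i(\tilde\Delta F)=\partial_i\bigl(\tr_{\tilde\omega}\Ric(\omega)\bigr)=\tilde g^{p\bar q}\tilde\nabla_iR_{p\bar q}$, whose $\tilde\omega$-Christoffel symbols are the true source of the third-order terms; the bad $-F_{i\bar j}$-part of the Chern--Ricci term is dealt with together with these third-order terms, using $\tilde\Delta F=\tr_{\tilde\omega}\Ric(\omega)-\underline{\R}$, Lemma \ref{gradient_varphi_bound} for $|\partial\varphi|^2_\omega$, and the $C^0$ bounds of Lemma \ref{entropy_bound_suff}.

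Assembling $e^{-A}\tilde\Delta Q$ with $A=-\alpha(F+\lambda\varphi)$ and $\tilde\Delta A=\alpha\bigl(\underline{\R}-\tr_{\tilde\omega}\Ric(\omega)-\lambda n+\lambda\tr_{\tilde\omega}\omega\bigr)$ — so that the piece $-\alpha\lambda\tilde\Delta\varphi$ yields a definite positive multiple of $\tr_{\tilde\omega}\omega$ — one is left with the term-by-term bookkeeping, which I expect to be the main obstacle and which forces the ``very specific choice'' of $Q$. Here: (i) the torsion cross-terms $\tilde g\,T^i_{ik}\,\partial_{\bar q}(\cdot)$ are completed into squares as in Section 5, with the leftover $\tilde g^{i\bar j}T^k_{ki}\overline{T^\ell_{\ell j}}$ and the $A$-torsion cross-terms absorbed by $|\partial A|^2_{\tilde\omega}$ and by the $\tfrac{\lambda\alpha}{2}\tr_{\tilde\omega}\omega$ term; (ii) the third-order terms — from differentiating the right-hand side of \eqref{ccsc_eqn}, from differentiating $T$, from the $-F_{i\bar j}$-part of the Chern--Ricci term, and from the cross-terms containing $\partial_{\bar j}\tr_\omega\tilde\omega$ — are controlled by Young's inequality against $a(|\tilde\nabla\partial F|^2_{\tilde\omega}+|\tilde\nabla\bar\partial F|^2_{\tilde\omega})$ and against $\tfrac{|\partial\tr_\omega\tilde\omega|^2_{\tilde\omega}}{\tr_\omega\tilde\omega}$, choosing first $a$ and then $\alpha$ large enough that definite fractions of these squares survive; (iii) the zeroth- and first-order remainders are bounded via Lemmas \ref{entropy_bound_suff} and \ref{gradient_varphi_bound}. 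Discarding favorably-signed terms and using the arithmetic--geometric mean inequalities $\tr_\omega\tilde\omega\ge ne^{F/n}$ and $\tr_{\tilde\omega}\omega\ge ne^{-F/n}$ (which also bound $Q$ from below), one arrives at an inequality of the shape
$$e^{-A}\tilde\Delta Q \ge c_0\,\tr_{\tilde\omega}\omega\cdot\tr_\omega\tilde\omega - C\,Q.$$

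Finally I would carry out the Moser iteration as in Section 4 of \cite{cc17}: multiply the last inequality by $Q^{2p-1}$, integrate against $\tilde\omega^n=e^F\omega^n$, integrate by parts where $\tilde\Delta$ acts — the extra term produced when a derivative lands on $\omega^n$ being harmless under \eqref{dd_assumption} — and use the $L^{2p+1}$ bound on $\tr_\omega\tilde\omega$ from Section 5, hence on $Q$ since $F\in L^\infty$, as the base case; letting $p\to\infty$ gives $\|Q\|_{L^\infty}\le C(X,\omega,\text{Ent})$. Since $F$ is uniformly bounded and $\tr_\omega\tilde\omega,|\partial F|^2_{\tilde\omega}\ge0$, this gives $\max_X\tr_\omega\tilde\omega+\max_X|\partial F|^2_{\tilde\omega}\le C$; combined with the bound on $F$ this also bounds $\tr_{\tilde\omega}\omega$, establishing the quasi-isometry of $\omega$ and $\tilde\omega$.
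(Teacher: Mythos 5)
Your skeleton matches the paper's (add $|\partial F|^2_{\tilde\omega}$ to the trace, run a Chern--Bochner computation with respect to $\tilde\nabla$, finish by Moser iteration seeded by the Section 5 $L^p$ bound), but the two places you defer to ``term-by-term bookkeeping'' are precisely where your choice of $Q$ breaks down. First, Cherrier's inequality is not enough here. When you expand $\tilde\Delta|\partial F|^2_{\tilde\omega}$, convert $\tilde T$ and $\tilde\nabla\tilde T$ to $T$ via \eqref{torsion}, and differentiate $\tr_{\tilde\omega}\Ric(\omega)$ (note $(\tilde\Delta F)_p=-\tilde g^{a\bar j}\partial_p\tilde g_{a\bar b}\tilde g^{i\bar b}R_{i\bar j}+\tilde g^{i\bar j}\partial_pR_{i\bar j}$), you generate terms quadratic in the \emph{full} first derivatives $\partial\tilde g$ (third derivatives of $\varphi$ with one barred index), weighted by $|\partial F|^2_{\tilde\omega}$ and powers of $\tr_\omega\tilde\omega$. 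These are controlled neither by $|\tilde\nabla\tilde\nabla F|^2_{\tilde\omega}+|\tilde\nabla\bar{\tilde\nabla}F|^2_{\tilde\omega}$ nor by $|\partial\tr_\omega\tilde\omega|^2_{\tilde\omega}/\tr_\omega\tilde\omega$, which only sees a trace of $\partial\tilde g$. The only positive term of the right type is the full square $\tilde g^{p\bar j}\tilde g^{i\bar q}g^{k\bar\ell}\nabla_k\tilde g_{i\bar j}\nabla_{\bar\ell}\tilde g_{p\bar q}$ in the Tosatti--Weinkove expansion of $\tilde\Delta\tr_\omega\tilde\omega$, which Cherrier's logarithmic inequality has already discarded; and since the bad terms carry the factor $|\partial F|^2_{\tilde\omega}$ (comparable to $Q$ itself) and extra powers of the trace, a linear combination $\tr_\omega\tilde\omega+a|\partial F|^2_{\tilde\omega}$ cannot absorb them for any fixed $a$. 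This is why the paper takes $Q=e^{A(F)}|\partial F|^2_{\tilde\omega}+N(\tr_\omega\tilde\omega)^{B+1}$ with $B$ large, so that $(B+1)N(\tr_\omega\tilde\omega)^{B}$ times the full-gradient square dominates the $(\tr_\omega\tilde\omega)^{B}\cdot(\partial\tilde g)^{2}$ debris from the $|\partial F|^2$ part, and why the end product is the weaker inequality $\tilde\Delta Q\ge -C(\tr_\omega\tilde\omega)^{B+1}Q$ rather than your $c_0\,\tr_{\tilde\omega}\omega\,\tr_\omega\tilde\omega-CQ$; that weaker form still suffices for the iteration because Section 5 bounds every $L^p$ norm of the coefficient.

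Second, your weight $e^{-\alpha(F+\lambda\varphi)}$ has the wrong shape, and ``choose $\alpha$ large'' goes in the wrong direction. The cross terms $2\text{Re}\big(\tilde g^{i\bar j}A_i\partial_{\bar j}|\partial F|^2_{\tilde\omega}\big)$ together with the Ricci switch $\tilde R_{k\bar q}=R_{k\bar q}-F_{k\bar q}$ force you, after Young's inequality against the Hessian squares, to absorb a quartic term of size roughly $\alpha^2|\partial F|^4_{\tilde\omega}$; with your $A$ linear in $F$ (so $A''=0$) and $A'<0$, the only positive quartic available, coming from $|\partial A|^2_{\tilde\omega}\cdot a|\partial F|^2_{\tilde\omega}$, has a strictly smaller coefficient, and nothing is left over to control the torsion and derivative-of-torsion terms. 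The paper's ``very specific choice'' $A(F)=\kappa e^{F}+(\tfrac12-\varepsilon)F$, with $\kappa,\varepsilon$ tuned to the $C^0$ bound on $F$ so that $A'\in(\tfrac12,1)$ and $A''-(A'-\tfrac12)\ge 0$, is exactly what preserves the positive remainders $\varepsilon|\partial F|^4_{\tilde\omega}$ and $\tfrac12|\tilde\nabla\bar{\tilde\nabla}F|^2_{\tilde\omega}$ that do this job. So while the outline is right, the core maximum-principle estimate as you propose it does not close, and this constitutes a genuine gap.
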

\begin{proof}
Let $\tilde{\nabla}$, $\tilde{R}$ and $\tilde{T}$ denote, respectively, the covariant derivative, curvature tensor and torsion with respect to $\tilde{g}$. Commuting derivatives as in \eqref{a_comm_formula} and \eqref{f_comm_formula}, we have the following:
\begin{align}\begin{split}\label{laplacian_of_grad_norm}
\tilde{\Delta}(|\partial F|^2_{\tilde{\omega}})&=\tilde{g}^{i\bar{j}}\tilde{g}^{p\bar{q}}\big((\tilde{\nabla}_p \tilde{\nabla}_i \tilde{\nabla}_{\bar{j}}F + \tilde{T}^r_{pi}\tilde{\nabla}_r F_{\bar{j}})F_{\bar{q}} +\overline{\tilde{T}^r_{qj}}\tilde{\nabla}_i F_{\bar{r}}F_p + \tilde{\nabla}_i\overline{\tilde{T}^r_{qj}}F_{\bar{r}}F_p\\
& \ \ \ \ + (\tilde{\nabla}_{\bar{q}}\tilde{\nabla}_i \tilde{\nabla}_{\bar{j}}F + \tilde{R}_{i\bar{q}\; \bar{j}}^{\; \;\; \bar{\ell}}F_{\bar{\ell}})F_p  \big)  +  |\tilde{\nabla}\tilde{\nabla} F|^2_{\tilde{\omega}}+|\tilde{\nabla} \bar{\tilde{\nabla}} F|^2_{\tilde{\omega}}\\
&=\tilde{g}^{p\bar{q}}\big(\tilde{\Delta} F)_p F_{\bar{q}} + 2\text{Re}(\tilde{g}^{i\bar{j}}\tilde{g}^{p\bar{q}}\tilde{T}^r_{pi}\tilde{\nabla}_r F_{\bar{j}}F_{\bar{q}}) + \tilde{g}^{i\bar{j}}\tilde{g}^{p\bar{q}}\tilde{\nabla}_i\overline{\tilde{T}^r_{qj}}F_{\bar{r}}F_p\\
& \ \ \ \ + \tilde{g}^{p\bar{q}}(\tilde{\Delta} F)_{\bar{q}}F_p   + \tilde{g}^{i\bar{j}}\tilde{g}^{p\bar{q}}\tilde{R}_{i\bar{q}\; \bar{j}}^{\;\;\;\bar{\ell}} F_{\bar{\ell}}F_p+ |\tilde{\nabla}\tilde{\nabla} F|^2_{\tilde{\omega}}+|\tilde{\nabla} \bar{\tilde{\nabla}} F|^2_{\tilde{\omega}}\\
&=\tilde{g}^{p\bar{q}}\big(\tilde{\Delta} F)_p F_{\bar{q}} + 2\text{Re}(\tilde{g}^{i\bar{j}}\tilde{g}^{p\bar{q}}\tilde{g}^{r\bar{k}}\tilde{T}_{pi\bar{k}}\tilde{\nabla}_r F_{\bar{j}}F_{\bar{q}})\\
 & \ \ \ \ +\tilde{g}^{i\bar{j}}\tilde{g}^{p\bar{q}}\tilde{g}^{t\bar{r}}\tilde{\nabla}_i(\overline{\tilde{T}_{qj\bar{t}}})F_{\bar{r}}F_p + \tilde{g}^{p\bar{q}}(\tilde{\Delta} F)_{\bar{q}}F_p+ \tilde{g}^{i\bar{j}}\tilde{g}^{p\bar{q}} \tilde{g}^{k\bar{\ell}}\tilde{R}_{i\bar{q}k\bar{j}} F_{\bar{\ell}}F_p\\
 & \ \ \ \ + |\tilde{\nabla}\tilde{\nabla} F|^2_{\tilde{\omega}}+|\tilde{\nabla} \bar{\tilde{\nabla}} F|^2_{\tilde{\omega}}\\
&=2\text{Re}(\tilde{g}^{p\bar{q}}\big(\tilde{\Delta} F)_p F_{\bar{q}}) + 2\text{Re}(\tilde{g}^{i\bar{j}}\tilde{g}^{p\bar{q}}\tilde{g}^{r\bar{k}}\tilde{T}_{pi\bar{k}}\tilde{\nabla}_r F_{\bar{j}}F_{\bar{q}})\\
& \ \ \ \ +\tilde{g}^{i\bar{j}}\tilde{g}^{p\bar{q}}\tilde{g}^{t\bar{r}}\tilde{\nabla}_i(\overline{\tilde{T}_{qj\bar{t}}})F_{\bar{r}}F_p  + \tilde{g}^{p\bar{q}} \tilde{g}^{k\bar{\ell}}\tilde{R}_{k\bar{q}}F_{\bar{\ell}}F_p\\
& \ \ \ \  -\tilde{g}^{p\bar{q}}\tilde{g}^{k\bar{\ell}}\tilde{g}^{r\bar{s}}\tilde{\nabla}_{\bar{q}}(\tilde{T}_{rk\bar{s}})F_{\bar{\ell}}F_p+|\tilde{\nabla}\tilde{\nabla} F|^2_{\tilde{\omega}}+|\tilde{\nabla} \bar{\tilde{\nabla}} F|^2_{\tilde{\omega}} .
\end{split}\end{align}
For a general real-valued function $A(F)$, 
\begin{align*}
e^{-A(F)}\tilde{\Delta}(e^{A(F)} |\partial F|^2_{\tilde{\omega}})&= \tilde{\Delta}(|\partial F|^2_{\tilde{\omega}}) +2A'\text{Re}(\tilde{g}^{i\bar{j}}\tilde{g}^{k\bar{\ell}}(F_iF_k F_{\bar{\ell}\bar{j}} +F_iF_{\bar{\ell}}F_{k\bar{j}}))\\
& \ \ \ \ + (A'^2+A'')|\partial F|^4_{\tilde{\omega}} + A'\tilde{\Delta} F |\partial F|^2_{\tilde{\omega}} ,
\end{align*}
where we use the simplified notation $F_{\bar{\ell}\bar{j}}$ to denote $\tilde{\nabla}_{\bar{j}}\tilde{\nabla}_{\bar{\ell}}F$.
Substituting \eqref{laplacian_of_grad_norm} for the first term in the above equation and noting the following completed square:
\begin{align*}
A'^2|\partial F|^4_{\tilde{\omega}} + 2A'\text{Re}(\tilde{g}^{i\bar{j}}\tilde{g}^{k\bar{\ell}}F_iF_kF_{\bar{\ell}\bar{j}})+|\tilde{\nabla}\tilde{\nabla} F|^2_{\tilde{\omega}}\ge 0 ,
\end{align*}
we have that
\begin{align*}
e^{-A(F)}\tilde{\Delta}(e^{A(F)} |\partial F|^2_{\tilde{\omega}})&\ge 2\text{Re}(\tilde{g}^{p\bar{q}}(\tilde{\Delta} F)_p F_{\bar{q}}) +2\text{Re}(\tilde{g}^{i\bar{j}}\tilde{g}^{p\bar{q}}\tilde{g}^{r\bar{k}}\tilde{T}_{pi\bar{k}}\tilde{\nabla}_r F_{\bar{j}}F_{\bar{q}})\\
&\ \ \ \ +\tilde{g}^{i\bar{j}}\tilde{g}^{p\bar{q}}\tilde{g}^{t\bar{r}}\tilde{\nabla}_i(\overline{\tilde{T}_{qj\bar{t}}})F_{\bar{r}}F_p + \tilde{g}^{p\bar{q}} \tilde{g}^{k\bar{\ell}}\tilde{R}_{k\bar{q}}F_{\bar{\ell}}F_p\\
& \ \ \ \ -\tilde{g}^{p\bar{q}}\tilde{g}^{k\bar{\ell}}\tilde{g}^{r\bar{s}}\tilde{\nabla}_{\bar{q}}(\tilde{T}_{rk\bar{s}})F_{\bar{\ell}}F_p + |\tilde{\nabla} \bar{\tilde{\nabla}} F|^2_{\tilde{\omega}} + 2A'\tilde{g}^{i\bar{j}}\tilde{g}^{k\bar{\ell}}F_iF_{\bar{\ell}}F_{k\bar{j}}\\
& \ \ \ \ +A''|\partial F|^4_{\tilde{\omega}} +A'\tilde{\Delta} F|\partial F|^2_{\tilde{\omega}}.
\end{align*}
Switching the Ricci curvature of $\tilde{\omega}$ to that of $\omega$ using the relation
\begin{align*}
\tilde{R}_{k\bar{q}} = R_{k\bar{q}} - F_{k\bar{q}},
\end{align*} we arrive at 
\begin{align}\begin{split}\label{af_eqn}
e&^{-A(F)}\tilde{\Delta}(e^{A(F)} |\partial F|^2_{\tilde{\omega}})\\
&\ge 2\text{Re}(\tilde{g}^{p\bar{q}}(\tilde{\Delta} F)_p F_{\bar{q}}) +2\text{Re}(\tilde{g}^{i\bar{j}}\tilde{g}^{p\bar{q}}\tilde{g}^{r\bar{k}}\tilde{T}_{pi\bar{k}}\tilde{\nabla}_r F_{\bar{j}}F_{\bar{q}})+\tilde{g}^{i\bar{j}}\tilde{g}^{p\bar{q}}\tilde{g}^{t\bar{r}}\tilde{\nabla}_i(\overline{\tilde{T}_{qj\bar{t}}})F_{\bar{r}}F_p \\
& \ \ \ \  + \tilde{g}^{p\bar{q}} \tilde{g}^{k\bar{\ell}}R_{k\bar{q}}F_{\bar{\ell}}F_p -\tilde{g}^{p\bar{q}}\tilde{g}^{k\bar{\ell}}\tilde{g}^{r\bar{s}}\tilde{\nabla}_{\bar{q}}(\tilde{T}_{rk\bar{s}})F_{\bar{\ell}}F_p + |\tilde{\nabla} \bar{\tilde{\nabla}} F|^2_{\tilde{\omega}}\\
& \ \ \ \ + (2A'-1)\tilde{g}^{i\bar{j}}\tilde{g}^{k\bar{\ell}}F_iF_{\bar{\ell}}F_{k\bar{j}} +A''|\partial F|^4_{\tilde{\omega}} +A'\tilde{\Delta} F|\partial F|^2_{\tilde{\omega}}\\
&\ge 2\text{Re}(\tilde{g}^{p\bar{q}}(\tilde{\Delta} F)_p F_{\bar{q}}) +2\text{Re}(\tilde{g}^{i\bar{j}}\tilde{g}^{p\bar{q}}\tilde{g}^{r\bar{k}}\tilde{T}_{pi\bar{k}}\tilde{\nabla}_r F_{\bar{j}}F_{\bar{q}})+\tilde{g}^{i\bar{j}}\tilde{g}^{p\bar{q}}\tilde{g}^{t\bar{r}}\tilde{\nabla}_i(\overline{\tilde{T}_{qj\bar{t}}})F_{\bar{r}}F_p\\
& \ \ \ \ + \tilde{g}^{p\bar{q}} \tilde{g}^{k\bar{\ell}}R_{k\bar{q}}F_{\bar{\ell}}F_p -\tilde{g}^{p\bar{q}}\tilde{g}^{k\bar{\ell}}\tilde{g}^{r\bar{s}}\tilde{\nabla}_{\bar{q}}(\tilde{T}_{rk\bar{s}})F_{\bar{\ell}}F_p + (1-(A'-\tfrac{1}{2}))|\tilde{\nabla} \bar{\tilde{\nabla}} F|^2_{\tilde{\omega}}\\
& \ \ \ \ +(A'' - (A'-\tfrac{1}{2}))|\partial F|^4_{\tilde{\omega}} +A'\tilde{\Delta} F|\partial F|^2_{\tilde{\omega}}  
\end{split}\end{align}
where we used the following Cauchy-Schwarz inequality:
\begin{align*}
(2A'-1)\tilde{g}^{i\bar{j}}\tilde{g}^{k\bar{\ell}}F_iF_{\bar{\ell}}F_{k\bar{j}} &\ge -(A'-\tfrac{1}{2})|\partial F|^4_{\tilde{\omega}} - (A'-\tfrac{1}{2})|\tilde{\nabla}\bar{\tilde{\nabla}} F|^2_{\tilde{\omega}}
\end{align*}
for $A'>\tfrac{1}{2}$.

In order to control the bad torsion terms (the second, third and fifth terms in the last line of \eqref{af_eqn}), we will need to specifically choose our function $A(F)$ to ensure that $1-(A'-\tfrac{1}{2})>0$ and $A''-(A'-\tfrac{1}{2})>0$. We can accomplish this by choosing $$A(F)=\kappa e^{F} +F(\tfrac{1}{2}-\varepsilon),$$ so that $A'(F)=\kappa e^{F}+\tfrac{1}{2}-\varepsilon$ and $A''(F)= \kappa e^F$. We then can choose $\varepsilon, \kappa>0$ such that
$$0 \le A'' - \varepsilon = A' -\tfrac{1}{2} \le \tfrac{1}{2}$$\[ \Leftrightarrow \begin{cases} 
	\kappa e^{\min_X F}-\varepsilon \ge 0\\
	\kappa e^{\max_X F}-\varepsilon\le \tfrac{1}{2}\ .
   \end{cases}
\]

We can first choose $\kappa$ small enough such that $\kappa e^{\max_X F}\le \tfrac{1}{2}$. Then choose $\varepsilon$ small enough such that $\kappa e^{\min F}\ge \varepsilon$. This ensures that $A' \in(\tfrac{1}{2},1)$.

It follows that
\begin{align*}
e&^{-A(F)}\tilde{\Delta}(e^{A(F)}|\partial F|_{\tilde{\omega}}^2)\\
& \ge 2\text{Re}\big(\tilde{g}^{p\bar{q}}\big(\tilde{\Delta} F)_p F_{\bar{q}}\big)+ 2\text{Re}(\tilde{g}^{i\bar{j}}\tilde{g}^{p\bar{q}}\tilde{g}^{r\bar{k}}\tilde{T}_{pi\bar{k}}\tilde{\nabla}_r F_{\bar{j}}F_{\bar{q}}) +\tilde{g}^{i\bar{j}}\tilde{g}^{p\bar{q}}\tilde{g}^{t\bar{r}}\tilde{\nabla}_i(\overline{\tilde{T}_{qj\bar{t}}})F_{\bar{r}}F_p \\
& \ \ \ \   + \tilde{g}^{p\bar{q}} \tilde{g}^{k\bar{\ell}}R_{k\bar{q}}F_p F_{\bar{\ell}} -\tilde{g}^{p\bar{q}}\tilde{g}^{k\bar{\ell}}\tilde{g}^{r\bar{s}}\tilde{\nabla}_{\bar{q}}(\tilde{T}_{rk\bar{s}})F_{\bar{\ell}}F_p  + \tfrac{1}{2}|\tilde{\nabla} \bar{\tilde{\nabla}} F|^2_{\tilde{\omega}}+ \varepsilon |\partial F|_{\tilde{\omega}}^4 + A'\tilde{\Delta} F |\partial F|_{\tilde{\omega}}^2 \\
& \ge 2\text{Re}\big(\tilde{g}^{p\bar{q}}\big(\tilde{\Delta} F)_p F_{\bar{q}}\big)+ 2\text{Re}(\tilde{g}^{i\bar{j}}\tilde{g}^{p\bar{q}}\tilde{g}^{r\bar{k}}T_{pi\bar{k}}\tilde{\nabla}_r F_{\bar{j}}F_{\bar{q}}) +\tilde{g}^{i\bar{j}}\tilde{g}^{p\bar{q}}\tilde{g}^{t\bar{r}}\partial_i \overline{T_{qj\bar{t}}}F_{\bar{r}}F_p  \\
& \ \ \ \ - \tilde{g}^{i\bar{j}}\tilde{g}^{p\bar{q}}\tilde{g}^{t\bar{r}}\tilde{g}^{s\bar{k}}\partial_i \tilde{g}_{t\bar{k}}\overline{T_{qj\bar{s}}}F_{\bar{r}}F_p+ \tilde{g}^{p\bar{q}} \tilde{g}^{k\bar{\ell}}R_{k\bar{q}}F_p F_{\bar{\ell}} -\tilde{g}^{p\bar{q}}\tilde{g}^{k\bar{\ell}}\tilde{g}^{r\bar{s}}\partial_{\bar{q}}(T_{rk\bar{s}})F_{\bar{\ell}}F_p \\
& \ \ \ \  +\tilde{g}^{p\bar{q}}\tilde{g}^{k\bar{\ell}}\tilde{g}^{r\bar{s}}\tilde{g}^{i\bar{j}}\partial_{\bar{q}} \tilde{g}_{i\bar{s}}T_{rk\bar{j}}F_{\bar{\ell}}F_p  + \tfrac{1}{2}|\tilde{\nabla}\bar{\tilde{\nabla}}F|^2_{\tilde{\omega}} + \varepsilon |\partial F|^4_{\tilde{\omega}} - |\tilde{\Delta} F ||\partial F|^2_{\tilde{\omega}} 
\end{align*}
where we converted the covariant derivatives to partial derivatives as in \eqref{cov_to_partial} and passed the torsion terms of $\tilde{g}$ to those of $g$ as in \eqref{torsion}.
 
We can rewrite the first term appearing on the right hand side of the above inequality by using the following:
\begin{align*}
(\tilde{\Delta} F)_p &= \partial_p (\tilde{g}^{i\bar{j}}R_{i\bar{j}}) = -\tilde{g}^{a\bar{j}}\partial_p \tilde{g}_{a\bar{b}}\tilde{g}^{i\bar{b}}R_{i\bar{j}} + \tilde{g}^{i\bar{j}}\partial_p R_{i\bar{j}} \ .
\end{align*}

Putting this all together, applying Young's inequality and choosing $B$ to be at least $3(n-1)$, where the factor of $n-1$ comes from the fact that $\tr_\omega \tilde{\omega} \le C(\tr_{\tilde{\omega}}\omega)^{n-1}$, we have
\begin{align*}
e&^{-A(F)}\tilde{\Delta}(e^{A(F)}|\partial F|^2_{\tilde{\omega}})\\
&\ge -2\text{Re}(\tilde{g}^{p\bar{q}}\tilde{g}^{a\bar{j}}\partial_p\tilde{g}_{a\bar{b}}\tilde{g}^{i\bar{b}}R_{i\bar{j}}F_{\bar{q}})+2\text{Re}(\tilde{g}^{p\bar{q}}\tilde{g}^{i\bar{j}}\partial_p R_{i\bar{j}}F_{\bar{q}}) + 2\text{Re}(\tilde{g}^{i\bar{j}}\tilde{g}^{p\bar{q}}\tilde{g}^{r\bar{k}}T_{pi\bar{k}}\tilde{\nabla}_r F_{\bar{j}}F_{\bar{q}})\\
& \ \ \ \  +\tilde{g}^{i\bar{j}}\tilde{g}^{p\bar{q}}\tilde{g}^{t\bar{r}}\partial_i \overline{T_{qj\bar{t}}}F_{\bar{r}}F_p  - \tilde{g}^{i\bar{j}}\tilde{g}^{p\bar{q}}\tilde{g}^{t\bar{r}}\tilde{g}^{s\bar{k}}\partial_i \tilde{g}_{t\bar{k}}\overline{T_{qj\bar{s}}}F_{\bar{r}}F_p+ \tilde{g}^{p\bar{q}} \tilde{g}^{k\bar{\ell}}R_{k\bar{q}}F_p F_{\bar{\ell}} \\
& \ \ \ \  -\tilde{g}^{p\bar{q}}\tilde{g}^{k\bar{\ell}}\tilde{g}^{r\bar{s}}\partial_{\bar{q}}(T_{rk\bar{s}})F_{\bar{\ell}}F_p   +\tilde{g}^{p\bar{q}}\tilde{g}^{k\bar{\ell}}\tilde{g}^{r\bar{s}}\tilde{g}^{i\bar{j}}\partial_{\bar{q}} \tilde{g}_{i\bar{s}}T_{rk\bar{j}}F_{\bar{\ell}}F_p + \tfrac{1}{2}|\tilde{\nabla} \bar{\tilde{\nabla}} F|^2_{\tilde{\omega}} \\
& \ \ \ \ + \varepsilon |\partial F|^4_{\tilde{\omega}} - |\tilde{\Delta} F ||\partial F|^2_{\tilde{\omega}} \\
&\ge -C(\tr_\omega \tilde{\omega})^Bg^{i\bar{j}}\tilde{g}^{k\bar{\ell}}\tilde{g}^{p\bar{q}}\partial_i \tilde{g}_{k\bar{q}}\partial_{\bar{j}}\tilde{g}_{p\bar{\ell}} + \tfrac{1}{4}|\tilde{\nabla} \bar{\tilde{\nabla}} F|^2_{\tilde{\omega}} - C(\tr_\omega\tilde{\omega})^B|\partial F|^2_{\tilde{\omega}}  - C(\tr_\omega\tilde{\omega})^B .
\end{align*}
Now, we use the following computation in the proof of Equation (9.5) of \cite{tw15} for $\tilde{\Delta}\tr_\omega\tilde{\omega}$:
\begin{align*}
\tilde{\Delta}\tr_\omega \tilde{\omega}&=\tilde{g}^{p\bar{j}}\tilde{g}^{i\bar{q}}g^{k\bar{\ell}}\nabla_k\tilde{g}_{i\bar{j}}\nabla_{\bar{\ell}}\tilde{g}_{p\bar{q}}+2\text{Re}(\tilde{g}^{i\bar{j}}g^{k\bar{\ell}}T^p_{ki}\nabla_{\bar{\ell}}\tilde{g}_{p\bar{j}})+\tilde{g}^{i\bar{j}}g^{k\bar{\ell}}T^p_{ik}\overline{T^q_{j\ell}}\tilde{g}_{p\bar{q}}\\
& \ \ \ \ +g^{i\bar{j}}F_{i\bar{j}}-R +\tilde{g}^{i\bar{j}}\nabla_i \overline{T^\ell_{j\ell}}+\tilde{g}^{i\bar{j}}g^{k\bar{\ell}}\nabla_{\bar{\ell}}T^p_{ik}-\tilde{g}^{i\bar{j}}g^{k\bar{\ell}}\tilde{g}_{k\bar{q}}(\nabla_i\overline{T^q_{j\ell}}-R_{i\bar{\ell}p\bar{j}}g^{p\bar{q}})\\
& \ \ \ \ -\tilde{g}^{i\bar{j}}g^{k\bar{\ell}}T^p_{ik}\overline{T^q_{j\ell}}g_{p\bar{q}}.
\end{align*}
Converting the first term into covariant derivatives and applying Young's inequality, we have
\begin{align*}
\tilde{g}^{p\bar{j}}\tilde{g}^{i\bar{q}}g^{k\bar{\ell}}\nabla_k \tilde{g}_{i\bar{j}}\nabla_{\bar{\ell}}\tilde{g}_{p\bar{q}} \ge \tilde{g}^{p\bar{j}}\tilde{g}^{i\bar{q}}g^{k\bar{\ell}}\partial_k \tilde{g}_{i\bar{j}}\partial_{\bar{\ell}}\tilde{g}_{p\bar{q}}-\tfrac{\varepsilon}{2}\tilde{g}^{p\bar{j}}\tilde{g}^{i\bar{q}}g^{k\bar{\ell}}\partial_k \tilde{g}_{i\bar{j}}\partial_{\bar{\ell}}\tilde{g}_{p\bar{q}} - C(\tr_\omega \tilde{\omega})^{n} .
\end{align*}
Likewise, the second term can be bounded below by
\begin{align*}
2\text{Re}(\tilde{g}^{i\bar{j}}g^{k\bar{\ell}}T^p_{ki}\nabla_{\bar{\ell}}\tilde{g}_{p\bar{j}}) \ge -\tfrac{\varepsilon}{2}\tilde{g}^{p\bar{j}}\tilde{g}^{i\bar{q}}g^{k\bar{\ell}}\partial_k \tilde{g}_{i\bar{j}}\partial_{\bar{\ell}}\tilde{g}_{p\bar{q}} - C(\tr_\omega\tilde{\omega})^{n},
\end{align*}
and the fourth term by
\begin{align*}
g^{i\bar{j}}F_{i\bar{j}}&\ge - \tfrac{|\tilde{\nabla}\bar{\tilde{\nabla}}F|_{\tilde{\omega}}^2}{\delta} - C\delta(\tr_\omega \tilde{\omega})^2.
\end{align*}
It is straightforward to see that the remaining terms can be bounded below by $-C(\tr_\omega\tilde{\omega})^{n}$.
Choosing $B\ge n$ and $\delta = 4e^{-A(F)}N(B+1)(\tr_\omega\tilde{\omega})^{B}$, we arrive at the following:
\begin{align*}
\tilde{\Delta}\tr_\omega\tilde{\omega}&\ge (1-\varepsilon)\tilde{g}^{p\bar{j}}\tilde{g}^{i\bar{q}}g^{k\bar{\ell}}\partial_k\tilde{g}_{i\bar{j}}\partial_{\bar{\ell}}\tilde{g}_{p\bar{q}} -\tfrac{e^{A(F)}}{4N(B+1)(\tr_\omega\tilde{\omega})^B}|\tilde{\nabla}\bar{\tilde{\nabla}}F|^2_{\tilde{\omega}} -C(\tr_\omega\tilde{\omega})^{B+2}.
\end{align*}
Observe that
\begin{align*}
\tilde{\Delta} (\tr_\omega \tilde{\omega})^{B+1} &= (B+1)B(\tr_\omega \tilde{\omega})^{B-1}|\partial \tr_\omega \tilde{\omega}|^2_{\tilde{\omega}} + (B+1)(\tr_\omega \tilde{\omega})^{B}\tilde{\Delta} \tr_\omega \tilde{\omega}\\
&\ge (B+1)(\tr_\omega \tilde{\omega})^{B}\tilde{\Delta} \tr_\omega \tilde{\omega} .
\end{align*}
Choosing $N$ sufficiently large and letting $Q:= e^{A(F)}|\partial F|^2_{\tilde{\omega}} + N(\tr_\omega \tilde{\omega})^{B+1}$, we have
\begin{align*}
\tilde{\Delta} Q &= \tilde{\Delta}(e^{A(F)}|\partial F|^2_{\tilde{\omega}} + N(\tr_\omega \tilde{\omega})^{B+1})\\
& \ge -C(\tr_\omega \tilde{\omega})^B |\partial F|^2_{\tilde{\omega}} - C(\tr_\omega \tilde{\omega})^{2B+2}\\
& \ge -C(\tr_\omega \tilde{\omega})^{B+1}(|\partial F|^2_{\tilde{\omega}} +N(\tr_\omega \tilde{\omega})^{B+1})\\
& \ge -C(\tr_\omega \tilde{\omega})^{B+1} Q  .
\end{align*}
The rest of the proof leading to the $L^\infty$ bound on $Q$ follows using Moser iteration and several instances of the H\"older inquality and the Sobolev inequality with respect to the reference metric $\omega$, see Section 4 of \cite{cc17}. The constants and powers of the trace differ slightly from the K\"ahler case, but do not affect the iteration method. In addition, showing an $L^1$ bound on the quantity $Q$ is straightforward since the bound for $|\partial F|^2_{\tilde{\omega}}$ holds the same way as in (4.35) of \cite{cc17} and the $L^1$ bound on $(\tr_\omega \tilde{\omega})^{B+1}$ follows using the $L^{B+1}$ norm we obtained in Section 5.
\end{proof}
Combining this upper bound on $\tr_\omega\tilde{\omega}$ with the fact that we already have a lower bound establishes the quasi-isometry of $\omega$ and $\tilde{\omega}$. The higher order estimates of $\varphi$ can then be obtained using a bootstrapping argument as in the proof of Proposition 1.2 in \cite{cc17} where the $C^{2,\alpha}$ estimate is obtained using the result in \cite{tssy}.

\section*{Acknowledgements}
The author would like to thank her thesis advisor Ben Weinkove for suggesting this problem and for his continued support, encouragement and countless helpful insights. The author is also grateful to Viktor Burghardt, Gregory Edwards, Nicholas McCleerey and Cristiano Spotti for several useful conversations.

\bibliography{bib_for_all}

\providecommand{\bysame}{\leavevmode\hbox to3em{\hrulefill}\thinspace}
\providecommand{\MR}{\relax\ifhmode\unskip\space\fi MR }
\providecommand{\MRhref}[2]{%
  \href{http://www.ams.org/mathscinet-getitem?mr=#1}{#2}
}
\providecommand{\href}[2]{#2}
\begin{thebibliography}{10}

\bibitem{acs}
D.~Angella, S.~Calamai, and C.~Spotti, \emph{{On the Chern-Yamabe problem}},
  Math. Res. Lett. \textbf{24} (2017), no.~3, 645--677.

\bibitem{acs2}
\bysame, \emph{{Remarks on Chern-Einstein Hermitian metrics}}, arXiv:1901.04309
  (2019).

\bibitem{aubin78}
T.~Aubin, \emph{{Equations du type Monge-Amp\`ere sur les varietes
  k\"ahleriennes compactes}}, Bull. Sci. Math. (2) \textbf{102} (1978), no.~1,
  63–95.

\bibitem{berman}
R.~J. Berman, \emph{{K-polystability of Q-Fano varieties admitting
  K\"ahler-Einstein metrics}}, Invent. Math. \textbf{203} (2016), no.~3,
  973--1025.

\bibitem{blocki}
Z.~B\l{}ocki, \emph{{On uniform estimate in Calabi-Yau theorem}}, Proc.
  SCV2004, Beijing, Sci. China Ser. A. \textbf{48} (2005), 244--247.

\bibitem{blocki2}
Z.~B{\l}ocki, \emph{{On the uniform estimate in the Calabi-Yau theorem, II}},
  Science China Mathematics \textbf{54} (2011), no.~7, 1375--1377.

\bibitem{calabi1}
E.~Calabi, \emph{{Extremal K\"ahler metrics. In \textit{Seminar on Differential
  Geometry}}}, Ann. of Math. Stud. Princeton Univ. Press, Princeton, NJ.
  \textbf{102} (1982), 259--290.

\bibitem{chen15}
X.~X. Chen, \emph{{On the existence of constant scalar curvature K\"ahler
  metric: a new perspective}}, Annales mathématiques du Québec \textbf{42}
  (2018), 169--189.

\bibitem{cc17}
X.~X. Chen and J.~Cheng, \emph{{On the constant scalar curvature K\"ahler
  metrics, apriori estimates}}, arXiv:1712.06697 (2017).

\bibitem{cc18}
\bysame, \emph{{On the constant scalar curvature K\"ahler metrics, existence
  results}}, arXiv:1801.00656 (2018).

\bibitem{cds1}
X.~X. Chen, S.~Donaldson, and S.~Sun, \emph{{K\"ahler-Einstein metrics on Fano
  manifolds. I: Approximation of metrics with cone singularities}}, J. Amer.
  Math. Soc. \textbf{28} (2015), 183--197.

\bibitem{cds2}
\bysame, \emph{{K\"ahler-Einstein metrics on Fano manifolds. II: Limits with
  cone angle less than 2$\pi$}}, J. Amer. Math. Soc. \textbf{28} (2015),
  199--234.

\bibitem{cds3}
\bysame, \emph{{K\"ahler-Einstein metrics on Fano manifolds. III: Limits as
  cone angle approaches 2$\pi$ and completion of the main proof.}}, J. Amer.
  Math. Soc. \textbf{28} (2015), 235--278.

\bibitem{cherrier}
P.~Cherrier, \emph{{\'Equations de Monge-Am\`pere sur les vari\'et\'es
  Riemanniennes compactes, II}}, J. Functional Anal. \textbf{41} (1981),
  341--353.

\bibitem{darvas}
T.~Darvas, \emph{{Geometric pluripotential theory on K\"ahler manifolds}},
  arXiv:1902.01982 (2019).

\bibitem{dk09}
S.~Dinew and S.~Ko\l{}odziej, \emph{{Pluripotential estimates on compact
  Hermitian manifolds}}, Adv. Lect. Math. (ALM), vol.~21, Int. Press,
  Somerville, MA, 2012.

\bibitem{donaldson}
S.~K. Donaldson, \emph{{Scalar curvature and projective embeddings, I}}, J.
  Differential Geometry \textbf{59} (2001), no.~3, 479--522.

\bibitem{donaldsonbook}
\bysame, \emph{{Stability of algebraic varieties and K\"ahler geometry}}, vol.
  97.1 of Proc. Sympos. Pure Math., 2018.

\bibitem{gauduchon77}
P.~Gauduchon, \emph{{Le th\'eor\`eme de l’excentricit\'e nulle}}, C. R. Acad.
  Sci. Paris Ser. A-B \textbf{285} (1977), no.~5, A387–A390.

\bibitem{gt}
D.~Gilbarg and N.~S. Trudinger, \emph{{Elliptic partial differential equations
  of second order}}, Berlin: Springer-Verlag, 1983.

\bibitem{h15}
Y.~Hashimoto, \emph{{Existence of twisted constant scalar curvature K\"ahler
  metrics with a large twist }}, arXiv: 1508.00513 (2015).

\bibitem{he2}
W.~He, \emph{{On Calabi's extremal metrics and properness}}, arXiv:1801.07636
  (2018).

\bibitem{he1}
\bysame, \emph{{Scalar curvature and properness on Sasaki manifolds}},
  arXiv:1802.03841 (2018).

\bibitem{hormander2}
L.~H\"ormander, \emph{{Notions of Convexity}}, Birkh\"auser, Boston, MA, 1994.

\bibitem{hlt}
S.~Huang, M-C. Lee, and L-F. Tam, \emph{{Instantaneously complete Chern-Ricci
  flow and K\"ahler-Einstein metrics}}, arXiv:1902.04017 (2019).

\bibitem{kolodziej}
S.~Ko\l{}odziej, \emph{{The complex Monge-Amp\`ere equation}}, Acta Mathematica
  \textbf{180} (1998), no.~1, 69--117.

\bibitem{ppz}
D.~H. Phong, S.~Picard, and X.~Zhang, \emph{{New curvature flows in complex
  geometry}}, arXiv:1806.11235 (2018).

\bibitem{pss}
D.~H. Phong, J.~Song, and J.~Sturm, \emph{{Complex Monge-Amp\`ere equations}},
  vol. 17 of Surveys in differential geometry, Int. Press, Boston, MA, 2012.

\bibitem{popovici}
D.~Popovici, \emph{{Aeppli cohomology classes associated with Gauduchon metrics
  on compact complex manifolds}}, Bull. Soc. Math. France \textbf{143} (2015),
  no.~4, 763--800.

\bibitem{stoppa}
J.~Stoppa, \emph{{K-stability of constant scalar curvature K\"ahler
  manifolds}}, Adv. in Math. \textbf{221} (2009), 1397--1408.

\bibitem{streets}
J.~Streets, \emph{{Pluriclosed flow and the geometrization of complex
  surfaces}}, arXiv:1808.09490 (2018).

\bibitem{gabor2}
G.~Sz\'ekelyhidi, \emph{{An introduction to extremal K\"ahler metrics}}, vol.
  152 of Graduate Studies in Mathematics, American Mathematical Society,
  Providence, RI, 2014.

\bibitem{stw}
G.~Sz\'ekelyhidi, V.~Tosatti, and B.~Weinkove, \emph{{Gauduchon metrics with
  prescribed volume form}}, Acta Math \textbf{219} (2017), no.~1, 47--81.

\bibitem{tian87}
G.~Tian, \emph{{On K\"ahler-Einstein metrics on certain K\"ahler manifolds with
  $c_1(M) > 0$ }}, Invent. math. \textbf{89} (1987), 225--246.

\bibitem{tian97}
\bysame, \emph{{K\"ahler-Einstein metrics with positive scalar curvature}},
  Invent. Math. \textbf{130} (1997), no.~1, 1--37.

\bibitem{tian-yau87}
G.~Tian and S.-T. Yau, \emph{{K\"ahler-Einstein metrics on complex surfaces
  with $c_1>0$}}, Comm. Math. Phys. \textbf{112} (1987), no.~1, 175--203.

\bibitem{tssy}
V.~Tosatti, Y.~Wang, B.~Weinkove, and X.~Yang, \emph{{$C^{2,\alpha}$ estimates
  for nonlinear elliptic equations in complex and almost complex geometry}},
  Calc. Var. Partial Differential Equations \textbf{54} (2015), no.~1,
  431--453.

\bibitem{tw09}
V.~Tosatti and B.~Weinkove, \emph{{The complex Monge-Amp\`ere equation on
  compact Hermitian manifolds}}, J. Amer. Math. Soc. \textbf{23} (2010),
  1187--1195.

\bibitem{tw15}
\bysame, \emph{{On the evolution of a Hermitian metric by its Chern-Ricci
  form}}, J. Differential Geom. \textbf{99} (2015), no.~1, 125--163.

\bibitem{yury}
Y.~Ustinovskiy, \emph{{The Hermitian curvature flow on manifolds with
  non-negative Griffiths curvature}}, arXiv:1604.04813 (2016).

\bibitem{yang}
X.~Yang, \emph{Scalar curvature on compact complex manifolds}, Trans. Amer.
  Soc. \textbf{371} (2019), no.~3, 2073--2087.

\bibitem{yau78}
S.-T. Yau, \emph{{On the Ricci curvature of a compact K{\"a}hler manifold and
  the complex Monge-Amp\`ere equation, I}}, Comm. Pure Appl. Math. \textbf{31}
  (1978), 339–411.

\bibitem{yzz}
S.-T. Yau, Q.~Zhao, and F.~Zheng, \emph{{On Strominger K\"ahler-like manifolds
  with degenerate torsion}}, arXiv:1908.05322 (2019).

\bibitem{z16}
Y.~Zeng, \emph{{Deformations of twisted cscK metrics}}, Ph.D. dissertation,
  Stony Brook (2016).

\end{thebibliography}
\end{document}